\newtheorem{thm}{Theorem}[section]
\newtheorem{cor}[thm]{Corollary}
\newtheorem{lem}[thm]{Lemma}
\newtheorem{prop}[thm]{Proposition}
\theoremstyle{definition}
\newtheorem{Nota}[thm]{Remark}
\newcommand{\ve}{{\textbf{V}}}
\newcommand{\en}{\longrightarrow}
\newcommand{\id}{\mathrm{id}}
\newcommand{\Id}{\mathrm{Id}}
\newcommand{\I}{\mathrm{I}}
\newcommand{\be}{\mathrm{b}}
\newcommand{\de}{\mathrm{d}}
\newcommand{\ce}{\mathrm{c}}
\newcommand{\hbx}{\hfill\square}
\newcommand{\Obj}{\mathrm{Obj}}
\newcommand{\rc}{\mathcal{C}}
\newcommand{\rcs}{\mathcal{C}^{\textsf{S}_0}}
\newcommand{\s}{\textsc{S}}
\newcommand{\h}{\overline{\mathrm{h}}}
\newcommand{\Ce}{\mathrm{C}}
\begin{document}

\title { Braided bialgebras in a generated monoidal Ab-category}
\author{Ra\'ul A. P\'erez and Carlos Prieto}









\maketitle

\begin{abstract} We start from any small strict monoidal braided Ab-category
and extend it to a monoidal nonstrict braided Ab-category which
contains braided bialgebras. The objects of the original category
turn out to be modules for these bialgebras.
\end{abstract}



\setcounter{section}{-1}
\section{Introduction}
The notion of bialgebras and Hopf algebras in braided categories
were introduced by S. Majid in \cite{Mj1}. He has considered a
braided monoidal (tensor) category and in the usual definitions of
algebras, coalgebras, bialgebras and Hopf algebras, he has replaced
the flip by the braiding in the obvious way. Majid has called
braided bialgebra to a bialgebra in a braided category. Refer to
\cite{Ka95} and \cite{Tu94} for generalities in braided monoidal
categories and to \cite{Mj1}, \cite{Ta1} and \cite{Ta2} for the
definition and results in the theory of braided bialgebras and
braided Hopf
algebras.\\
 The purpose of this paper is to present a construction in which
 starting from a small braided monoidal Ab-category $\rc$ and an infinite
 set $\s_0$ we create a new monoidal braided category $\rcs$ that contains
 the original $\rc$ as a subcategory, and more important, it contains objects with bialgebra
 structure, in such a way that the objects of the original category
$\rc$ are modules for these bialgebras.
Remember that a category $\rc$ is said to be an Ab-category (also called
\emph{pre-abelian} category; cf.\cite{MacL}) if for any pair of objects
$V$, $W$ its set of morphisms $\hom(V,W)$ is an additive abelian
group and the composition of morphisms is bilinear. In the context
of monoidal Ab-categories we shall assume that the tensor product of
morphisms is bilinear. For the construction we proceed as follows.
Section \ref{seccion1} is divided into two parts; in the first part,
out of any small Ab-category $\rc$ and any set $\s_0$, we construct
the new category $\rcs$ which is also an Ab-category. In the second
part we assume that $\rc$ is strict monoidal and that $\s_0$ is
infinite, and so, we extend the monoidal structure to $\rcs$.
However, the extended monoidal structure is not strict, so we have
to work with associative constraints, and left and right units. In
the second part we show how to extend a braiding and a twist from
$\rc$ to $\rcs$. Since the new category is nonstrict monoidal, we
need to define algebras, coalgebras, bialgebras and modules in this
case. This is easily made if in the categorical definitions of the
latter notions we substitute the equalities by an equivalent
relation in the set of morphisms of $\rcs$. Roughly
speaking, we declare two morphisms of $\rcs$ related if their
domains and codomains are related by associativity and/or units.
This relation, obviously, agree with the identity if the category is
stric; this is  explained in detail at the end of section
\ref{seccion1}. We start section \ref{section2} defining
algebras, coalgebras, bialgebras and modules in nonstric monoidal
categories in general, and then state and prove the main theorem
(Theorem \ref{principal}) of this paper. Throughout the proof we use
graphical calculus as explained in \cite{Ka95} and \cite{Tu94}. This
work is influenced by  Yetter's paper \cite{Ye90}.


\section{The category $\rcs$}\label{seccion1}
Let $\mathcal{C}$ be a small Ab-category. We shall denote by
$\Obj(\rc)$ and $\mathcal{H}$ the sets of its objects and morphisms,
respectively. We are going to associate to $\mathcal{C}$ a new
category $\mathcal{C}^{\textsf{S}_0}$ as follows. Let us take a
fixed set $\textsc{S}_0$ and consider the set
$\mathcal{M}(\s_0,\Obj(\rc))=\{\xymatrix{\s_0\supset
\s_f\ar[r]^-{f}& \Obj(\rc)}\}$, where $\s_f$ is any subset of $\s_0$
and $f$ is a set-theoretical function. The \emph{objects} of
$\mathcal{C}^{\textsf{S}_0}$ will be the elements of
$\mathcal{M}(\s_0,\Obj(\rc))$. Let $f:\s_f\en \Obj(\rc)$ and
$g:\s_g\en \Obj(\rc)$ be two objects. A \emph{morphism} $F:f\en g$
will be a two-variable function $F:\s_f\times \s_g\en \mathcal{H}$
such that:
\begin{itemize}\label{def. de morfismo en cs}
\item[(i)] $F(x,y):f(x)\en g(y)$, for all $(x,y)\in \s_f\times
\s_g$.
\item[(ii)] If $\s_g$ is infinite, then for each $x\in \s_f$
there exists a finite set $S^{F}_x\subset \s_g$, such that
$F(x,y)=0$ if $y\in \s_g-S^{F}_x$.
\end{itemize}

Let $f:\s_f\en \Obj(\rc)$, $g:\s_g\en \Obj(\rc)$, and $h:\s_h\en
\Obj(\rc)$ be objects, and $F:f\en g$, $G:g\en h$ be morphisms. Define
$G\circ F:f\en h$ as the function $G\circ F:\s_f\times \s_h\en
\mathcal{H}$ given by:
\begin{equation}\label{composition}
(G\circ F)(x,y)=\sum_{z\in \s_g}G(z,y)\circ F(x,z)
\end{equation}
for $x\in \s_f$ and $y\in \s_h$. This sum is always finite.
Indeed, if we write $S^{F}_x=\{z_1,...,z_k\}$, then the sum
becomes
\begin{equation}\label{composition-sum}
(G\circ F)(x,y)=\sum_{i=1}^{k}G(z_i,y)\circ F(x,z_i)
\end{equation}
It is clear that the function $G\circ F$ satisfies condition
(i). Besides, if $y\notin S^{G}_{z_1}\cup ...\cup S^{G}_{z_k}$,
then $G(z_i,y)=0$ for $1\leq i\leq k$, so if we choose
$S_x^{(G\circ F)}=S^{G}_{z_1}\cup ...\cup S^{G}_{z_k}$, then we have
$y\in \s_h-S_x^{(G\circ F)}$, thus $(G\circ F)(x,y)=0$. Therefore
$G\circ F$ also satisfies condition (ii).

For any $f:\s_f\en \Obj(\rc)$ define $\Id_{f}:f\en f$ as the
function $\Id_{f}:\s_f\times \s_f\en \mathcal{H}$, given by
$\Id_{f}(x,y)=\delta_{x,y}\id_{f(x)}:f(x)\en f(y)$ for $(x,y)\in
\s_f\times \s_f$. For $G:f\en g$ one has
\begin{equation}\label{funct-1}
\begin{split}
(G\circ \Id_f)(x,y)&=\sum_{z\in \s_f}G(z,y)\circ \Id(x,z) \\
&=\sum_{z\in \s_f}G(z,y)\circ \delta_{x,z}\id_{f(x)}\\
&=G(x,y)
\end{split}
\end{equation}
Therefore $G\circ \Id_{f}=G$. Analogously $\Id_{g}\circ G=G$
 for any morphism $G:f\en g$.

Furthermore this operation is associative. Indeed, if $F:f\en g$,
$G:g\en h$, and $H:h\en i$, then
\begin{equation}\label{funct-2}
\begin{split}
((H\circ G)\circ F)(w,z)&=\sum_{x\in \s_g}(H\circ G)(x,z)\circ
F(w,x)\\
&=\sum_{x\in \s_g}\sum_{y\in \s_h}(H(y,z)\circ G(x,y))\circ
F(w,x)\\
&=\sum_{y\in \s_h}H(y,z)\circ (\sum_{x\in \s_g}G(x,y)\circ
F(w,x))\\
&=\sum_{y\in \s_h}H(y,z)\circ(G\circ F)(w,y)\\
&=(H\circ (G\circ F))(w,z)
\end{split}
\end{equation}

Hence we have proved that $\mathcal{C}^{\textsf{S}_0}$ is a
category.
 If for two morphisms $F,G:f\en g$ we define the function
$(F+G)(x,y)=F(x,y)+G(x,y)$, which trivially satisfies conditions (i)
and (ii), we see that $\rcs$ is also an  Ab-category. The following
proposition proves that the direct sum of certain
collections of objects in $\rcs$ is defined.

\begin{prop}
Let $\{f_i:\s_i\en \Obj(\rc)\}_{i\in \mathcal{I}}$ be any
collection of functions such that the sets $\s_i$, $i\in \mathcal{I}$,
are pairwise disjoint subsets of $\s_0$. Then $(f:\coprod_{i\in
\mathcal{I}} \s_i\en \Obj(\rc), J_i)$, where $f|_{\s_i}=f_i$ and
$J_k:\s_k \times \coprod_{i\in \mathcal{I}} \s_i\en \mathcal{H}$
is given by $J_k(x,y)=\delta_{xy}\id_x:\s_k\en \coprod_{i\in
\mathcal{I}} \s_i$, is the coproduct of $\{f_i:\s_i\en
\Obj(\rc)\}_{i\in \mathcal{I}}$ in $\mathcal{C}^{\textsf{S}_0}$.
\end{prop}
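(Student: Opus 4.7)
The plan is to verify the universal property of the coproduct directly. Given any object $g:\s_g\en \Obj(\rc)$ in $\rcs$ and any family of morphisms $\{G_i:f_i\en g\}_{i\in \mathcal{I}}$, I must produce a unique morphism $G:f\en g$ satisfying $G\circ J_i=G_i$ for every $i\in \mathcal{I}$.

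First I would define $G:\coprod_{i\in \mathcal{I}}\s_i\times \s_g\en \mathcal{H}$ by setting $G(x,y)=G_i(x,y)$ whenever $x\in \s_i$. Because the subsets $\s_i$ are pairwise disjoint, the index $i$ is uniquely determined by $x$, so $G$ is well-defined. Condition (i) from the definition of a morphism in $\rcs$ is inherited pointwise from each $G_i$, since $f(x)=f_i(x)$ for $x\in \s_i$. For condition (ii), assuming $\s_g$ is infinite, the finite set $S^{G}_x:=S^{G_i}_x\subset \s_g$ works for each $x\in \s_i$ by the finiteness hypothesis already satisfied by $G_i$.

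Next I would verify the commutativity relation $G\circ J_k=G_k$ by direct computation. By the definition of composition in $\rcs$, $(G\circ J_k)(x,y)=\sum_{z}G(z,y)\circ J_k(x,z)$, and since $J_k(x,z)=\delta_{xz}\id_{f_k(x)}$ vanishes unless $z=x$, the sum collapses to the single term $G(x,y)\circ \id_{f_k(x)}=G(x,y)$. For $x\in \s_k$ this equals $G_k(x,y)$ by construction of $G$. An analogous collapse of the composition sum also confirms that each $J_k$ itself satisfies condition (ii), with $S^{J_k}_x=\{x\}$.

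Finally, uniqueness follows from the same collapse phenomenon: if $G':f\en g$ is any other morphism with $G'\circ J_k=G_k$ for all $k$, then $(G'\circ J_k)(x,y)=G'(x,y)$ for $x\in \s_k$, forcing $G'(x,y)=G_k(x,y)=G(x,y)$ on every block $\s_k\times \s_g$, and hence $G'=G$ globally. The argument is essentially routine bookkeeping once $G$ is defined; the only step requiring mild care, and the one I would expect to be the \emph{main technical obstacle}, is ensuring that the finiteness condition (ii) for $G$ transfers cleanly from the $G_i$, which works because each $x\in \coprod \s_i$ lies in exactly one summand.
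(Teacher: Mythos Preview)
Your proof is correct and follows essentially the same approach as the paper: define the mediating morphism by patching together the given $G_i$ on each block $\s_i$, then check the coproduct equation by collapsing the composition sum against the Kronecker delta in $J_k$, and deduce uniqueness from the same collapse. You are in fact more careful than the paper, which omits the verification of conditions (i) and (ii) for the mediating morphism and for $J_k$.
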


\begin{proof} Suppose we are given an object $g:\s_g\en \Obj(\rc)$
and a family of morphisms $T_i:(f_i:\s_i\en \Obj(\rc))\en
(g:\s_g\en \Obj(\rc))$. Define $T:(f:\coprod_{i\in \mathcal{I}}
\s_i\en \Obj(\rc))\en (g:\s_g\en \Obj(\rc))$ to be the function
$T(t,y)=T_k(t,y):f(t)\en g(y)$ if $t\in \s_k$. Then if $x\in \s_k$
and $y\in \s_g$, we have
\begin{equation}
\begin{split}
(T\circ J_k)(x,y)&=\sum_{t\in \coprod\s_i} T(t,x)\circ
J_k(x,t)\\&=\sum_{t\in \coprod\s_i} \delta_{xt}T(t,y)\circ
\id_x\\&=T(x,y)\\&=T_k(x,y)
\end{split}
\end{equation}
 The last equality also shows the uniqueness of $T$.
\end{proof}
In particular we have the following.

 \begin{cor}
 If $\emptyset \neq \s_f\subset \s_0$, then any object $f:\s_f\en
 \Obj(\rc)$ is isomorphic to the direct sum of the objects
  \{$f|_{\{x\}}:\{x\}\en \Obj(\rc)\}_{x\in \s}$.
 \end{cor}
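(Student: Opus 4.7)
The plan is to apply the preceding proposition directly, with essentially no additional work. The key observation is that the singletons $\{x\}_{x\in\s_f}$ are trivially pairwise disjoint subsets of $\s_0$, and their disjoint union is canonically $\s_f$ itself: indeed $\coprod_{x\in\s_f}\{x\}=\s_f$. Under this identification, the function assembled from the pieces $f|_{\{x\}}:\{x\}\en\Obj(\rc)$ by the recipe of the proposition is literally the original function $f:\s_f\en\Obj(\rc)$.

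Therefore I would invoke the proposition with index set $\mathcal{I}=\s_f$ and $\s_i=\{i\}$, $f_i=f|_{\{i\}}$. It tells us that $(f,J_x)_{x\in\s_f}$, with $J_x:\{x\}\times\s_f\en\mathcal{H}$ given by $J_x(x,y)=\delta_{xy}\id_{f(x)}$, is a coproduct of the family $\{f|_{\{x\}}\}_{x\in\s_f}$ in $\rcs$. Since any two coproducts of the same family are canonically isomorphic, $f$ is isomorphic to the direct sum $\bigoplus_{x\in\s_f}f|_{\{x\}}$.

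There is essentially no obstacle here; the only thing to double-check is that $S_f\neq\emptyset$ ensures the index set of the proposition is nonempty (so that the coproduct makes sense as a nonempty construction) and that the conditions (i) and (ii) in the definition of morphisms in $\rcs$ are vacuously satisfied by the morphisms $J_x$, each of whose source has only a single element. Both are clear, so the corollary follows immediately from the proposition.
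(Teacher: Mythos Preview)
Your proposal is correct and matches the paper's approach: the paper states the corollary as an immediate consequence of the proposition (introduced by ``In particular'') and gives no separate proof, so your direct specialization to $\mathcal{I}=\s_f$, $\s_i=\{i\}$, $f_i=f|_{\{i\}}$ is exactly what is intended.
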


 Let us suppose now that the category $\rc$ is strict monoidal and that the set $\s_0$ is infinite. In what follows we shall endow
 $\mathcal{C}^{\textsf{S}_0}$ with a monoidal structure extending the
 one given
 in $\rc$. However, as we shall see, the structure that we define is not strict in general.

We start by defining the tensor product of objects and morphisms and a unit object. Next
  we define the associative constraint $A$, the left and right units $L$ and
 $R$, and finally we prove that they satisfy the required conditions.

First, we fix once and for all a bijection $\gamma:\s_0\times \s_0\en \s_0$.
 Given two objects $f:\s_f\en \Obj(\rc)$ and $g:\s_g\en
\Obj(\rc)$, define $f\otimes g$ by the following composite
\begin{equation} \xymatrix@C=0.8cm{
f\otimes g:\gamma(\s_f\times
\s_g)\ar[r]^-{\gamma^{-1}|}&\s_f\times \s_g\ar[r]^-{f\times
g}&\Obj(\rc)\times \Obj(\rc)\ar[r]^-{\otimes}&\Obj(\rc)}.
\end{equation}
Choose any point $\ast$ en $\s_0$ and define $\I:\{\ast \}\en
\Obj(\rc)$ by $\I(\ast)=\I\in \Obj(\rc)$.

Now, for two morphisms
$F:f\en f'$, $G:g\en g'$, and a point $(z,z')\in \gamma(\s_f\times
\s_g)\times \gamma(\s_{f'}\times \s_{g'})$, define $F\otimes
G:f\otimes g\en f'\otimes g'$ by
\begin{equation}
(F\otimes G)(z,z'):=F(x_z,x'_{z'})\otimes
G(y_z,y'_{z'}):f(x_z)\otimes g(y_z)\en f'(x'_{z'})\otimes
g'(y'_{z'}),
\end{equation}
where $\gamma^{-1}(z)=(x_z,y_z)\in \s_f\times \s_g$ and
$\gamma^{-1}(z')=(x'_{z'},y'_{z'})\in \s_{f'}\times \s_{g'}$ are
the pairs such that $(f\otimes g)(z)=f(x_z)\otimes g(y_z)$ and
$(f'\otimes
g')(z')=f'(x'_{z'})\otimes g'(y'_{z'})$.

It is clear that $\gamma(S_{x_z}^F\times S_{y_z}^G)\subset
\gamma(\s_{f'}\times \s_{g'})$ is a finite set and that if $z'\in
\gamma(\s_{f'}\times \s_{g'})-\gamma(S_{x_z}^F\times S_{y_z}^G)$,
then $\gamma^{-1}(z')\notin S^{F}_{x_z}\times S^{G}_{y_z}$. Hence,
either $x'_{z'}\notin S_{x_z}^F$ or $y'_{z'}\notin S_{y_z}^G$ and
so $(F\otimes G)(z,z')=0$ if $z'\notin
\gamma(S^{F}_{x_z}\times S^{G}_{y_z})$.

 Before we define the associative constraint $A$, we shall adopt the
 following notation.
If, for example, $v\in
\gamma(\gamma(\s_f\times \s_g)\times \s_h)$, then we write
$$(\gamma^{-1}\times \id)\gamma^{-1}(v)=((x_v,y_v),z_v)\in \s_f\times
\s_g\times \s_h.$$
Here, $\gamma(x_v,y_v)$ is the unique element in $\gamma(\s_f\times
\s_g)\subset \s_0$ such that $\gamma(\gamma(x_v,y_v),z_v)=v$. In
other words, the inner parentheses will indicate the place from left to right of the
second $\gamma^{-1}$ in the composition $(\gamma^{-1}\times
\id)\gamma^{-1}$. Analogously for $w\in
\gamma(\s_f\times \gamma(\s_g\times \s_h))$ we write
$$(\id\times \gamma^{-1})\gamma^{-1}(w)=(x_w,(y_w,z_w))\in \s_f\times \s_g\times
\s_h.$$
When there is no risk of confusion we drop the inner
parentheses and  simply write $(\gamma^{-1}\times
\id)\gamma^{-1}(v)=(x_v,y_v,z_v)$ and $(\id\times
\gamma^{-1})\gamma^{-1}(w)=(x_w,y_w,z_w)$. In the same way, if
for example $v\in \s_{(f\otimes (g\otimes h))\otimes i}=
\gamma(\gamma(\s_f\times \gamma(\s_g\times \s_h))\times \s_i)$, then we
write
$$(\id\times \gamma^{-1}\times \id)(\gamma^{-1}\times
\id)\gamma^{-1}(v)=((x_v,(y_v,z_v)),t_v)\in \s_f\times \s_g\times
\s_h\times \s_i,$$
or $(\id\times \gamma^{-1}\times
\id)(\gamma^{-1}\times \id)\gamma^{-1}(v)=(x_v,y_v,z_v,t_v)$, etc.

With this notation, we have
\begin{equation}
\begin{split}((F\otimes G)\otimes H)(v,w)&=(F\otimes
G)((x,y)_v,(x,y)_w)\otimes H(z_v,z_w)\\
&=F(x_v,x_w)\otimes G(y_v,y_w)\otimes H(z_v,z_w).\end{split}
\end{equation} Let us define $A_{f,g,h}:(f\otimes g)\otimes h\en f\otimes (g\otimes h)$
by
\begin{equation}
\xymatrix@C=0.7cm{A_{f,g,h}(v,w)=\delta^{v,w}_{x;y;z}
\id_{f(x_{v})\otimes g(y_{v})\otimes h(z_v)}:((f\otimes g)\otimes
h)(v)\ar[r]&(f\otimes (g\otimes h))(w)},
\end{equation}
where again, in order to shorten the notation,
$\delta^{v,w}_{x;y;z}$ stands for $\delta_{x_{v},x_w}
\delta_{{y_{v},y_{w}}} \delta_{{z_v,z_{w}}}$. It is easy to see that
the inverse of $A_{f,g,h}$ is given by
\begin{equation}
\xymatrix@C=0.7cm{A^{-1}_{f,g,h}(w,v)=\delta^{w,v}_{x;y;z}
\id_{f(x_{w})\otimes g(y_{w})\otimes h(z_w)}:(f\otimes (g\otimes
h))(w)\ar[r]&((f\otimes g)\otimes h)(v)}.
\end{equation}

Now we define the right unit $R_f:f\otimes \I\en f$. For any object
$f$, the object $f\otimes \I$ is expressed by the composite
\begin{equation} \xymatrix@C=0.8cm{f\otimes
\I:\gamma(\s_f\times \{\ast\})\ar[r]^-{\gamma^{-1}|}&\s_f\times
\{\ast\}\ar[r]^-{f\times \I}&\Obj(\rc)\times
\Obj(\rc)\ar[r]^-{\otimes}&\Obj(\rc)}\,.
\end{equation}

For $z\in \gamma(\s_f\times \{\ast\})$, we write
$\gamma^{-1}(z)=(x_z,\ast)\in \s_f\times \ast$ and define
$R_f:f\otimes \I\en f$ by
\begin{equation}
R_f(z,x)=\delta_{x_z,x}\id_{f(x_z)}:(f\otimes \I)(z)=f(x_z)\en
f(x)\end{equation}
for $(z,x)\in \gamma(\s_f\times \{\ast\})\times
\s_f$. It is easy to see that $R_f$ is an isomorphism with inverse
$R^{-1}_f:f\en f\otimes \I$ given by the function\
\begin{equation}R^{-1}_f(x,z)=\delta_{x,x_z}\id_{f(x)}:f(x)\en
(f\otimes \I)(z)=f(x_z)\,.
\end{equation}
In the same way we define
the left unit $L_f:\I\otimes f\en f$, that is, if $z\in
\gamma(\ast\times \s_f)$, then we write $\gamma^{-1}(z)=(\ast,x_z)$ and
define
\begin{equation}
L_f(z,x)=\delta_{x_z,x}\id_{f(x_z)}:(\I\otimes f)(z)=f(x_z)\en
f(x)\end{equation}
The inverse of $L_f$ is given by
\begin{equation}L^{-1}_f(x,z)=\delta_{x,x_z}\id_{f(x)}:f(x)\en (\id\otimes
f)(z)=f(x_z) \,.
\end{equation}

\begin{thm}\label{teorema: construccion rcs} The category
  $\mathcal{C}^{\textsf{S}_0}$ is a monoidal category
with tensor product of objects and morphisms, associative
constraint, and right and left units as we have just defined.
\end{thm}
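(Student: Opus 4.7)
The plan is to check in order the following structural pieces: bifunctoriality of $\otimes$, the fact that $A$, $L$, $R$ are natural isomorphisms (invertibility was already established in the construction), and finally the pentagon and triangle axioms. Throughout, the main computational tool will be the observation that $A$, $L$, $R$ and indeed $F\otimes G$ are expressed as Kronecker deltas times identities in $\rc$, so all sums appearing in compositions (see \eqref{composition-sum}) collapse to single terms.

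First I would verify bifunctoriality. The identity law $\Id_f\otimes \Id_g=\Id_{f\otimes g}$ is immediate from the definition of $F\otimes G$, since the product of two deltas at $(v,w)$ with $\gamma^{-1}(v)=(x_v,y_v)$, $\gamma^{-1}(w)=(x_w,y_w)$ gives $\delta_{x_v,x_w}\delta_{y_v,y_w}\id_{f(x_v)\otimes g(y_v)}$, which is exactly $\Id_{f\otimes g}(v,w)$. For compatibility with composition, I would expand $((F'\circ F)\otimes(G'\circ G))(v,w)$ using \eqref{composition} in each tensor factor separately, and compare it with $((F'\otimes G')\circ(F\otimes G))(v,w)$, whose outer sum over $\gamma(\s_{f'}\times\s_{g'})$ reindexes via $\gamma^{-1}$ as a double sum over $\s_{f'}\times\s_{g'}$. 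Bilinearity of $\otimes$ in $\rc$ together with the functoriality identity $(a'\otimes b')\circ(a\otimes b)=(a'\circ a)\otimes(b'\circ b)$ in the strict category $\rc$ finishes the comparison term by term.

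Next I would check that $A$, $L$, $R$ are natural. Because $A_{f,g,h}(v,w)$ is $\delta_{x_v,x_w}\delta_{y_v,y_w}\delta_{z_v,z_w}$ times an identity, the outer sum in $A_{f',g',h'}\circ((F\otimes G)\otimes H)$ collapses to a single term of the form $F(x_v,x_w)\otimes G(y_v,y_w)\otimes H(z_v,z_w)$, with the codomain indices read through $(\id\times\gamma^{-1})\gamma^{-1}$; the other composite $(F\otimes(G\otimes H))\circ A_{f,g,h}$ produces the same expression. The same delta calculus settles the naturality of $L$ and $R$. I would then verify the pentagon by evaluating both sides at a pair $(v,w)$ with $v\in \s_{((f\otimes g)\otimes h)\otimes i}$ and $w\in \s_{f\otimes(g\otimes(h\otimes i))}$: the collapse of all intermediate sums yields on each side the single term $\delta_{x_v,x_w}\delta_{y_v,y_w}\delta_{z_v,z_w}\delta_{t_v,t_w}\id_{f(x_v)\otimes g(y_v)\otimes h(z_v)\otimes i(t_v)}$, where strictness of $\rc$ makes the fourfold identity unambiguous. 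The triangle axiom is checked analogously, matching the single index in the $(f\otimes\I)\otimes g$ decomposition against the corresponding one in $f\otimes(\I\otimes g)$.

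The main obstacle will be the bookkeeping with $\gamma$: each additional tensor layer inserts a copy of $\gamma^{-1}$, so that the pentagon involves three such layers on each side, and one must be careful to distinguish the different bracketings. I expect that fixing once and for all the convention $(x_v,y_v,z_v,t_v)$ for a fully expanded four-tuple (as the paper already suggests in the notation preceding the definition of $A$) and treating all deltas simultaneously will keep the computation uniform, reducing each coherence axiom to a single identity ``$\prod\delta\cdot\id$'' on both sides.
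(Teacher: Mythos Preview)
Your proposal is correct and follows essentially the same route as the paper: the paper divides the proof into four lemmas (bifunctoriality of $\otimes$, naturality of $A$ plus the pentagon, naturality of $R$ and $L$, and the triangle axiom), each proved by exactly the Kronecker-delta collapse you describe, using bilinearity of $\otimes$ in $\rc$ and the bijection $\gamma$ to reindex the sums. Your anticipated bookkeeping with the nested $\gamma^{-1}$'s and the ``$\prod\delta\cdot\id$'' pattern is precisely how the paper carries out the pentagon and triangle verifications.
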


We divide the {\em proof} into four lemmas.

\begin{lem} If $F:f\en f'$, $F':f'\en f''$, $G:g\en g'$, and $G':g'\en
g''$ are morphisms in $\mathcal{C}^{\textsf{C}_0}$, then
\begin{itemize}
\item[\em{(i)}] $(F'\otimes G')\circ (F\otimes G)=(F'\circ F)\otimes
(G'\circ G)$ and
\item[{\em (ii)}] $\Id_f\otimes \Id_g=\Id_{f\otimes g}$.
\end{itemize}
\end{lem}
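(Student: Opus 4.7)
The plan is to verify both statements by evaluating each side at an arbitrary argument and reducing to identities already known in $\rc$. The main bookkeeping issue will be handling the fixed bijection $\gamma$ and the reindexing of sums it produces.

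For (ii), I would fix $(z,z') \in \gamma(\s_f\times\s_g)\times\gamma(\s_f\times\s_g)$ and compute
\[
(\Id_f\otimes\Id_g)(z,z') = \Id_f(x_z,x_{z'})\otimes\Id_g(y_z,y_{z'}) = \delta_{x_z,x_{z'}}\delta_{y_z,y_{z'}}\,\id_{f(x_z)\otimes g(y_z)}.
\]
Since $\gamma$ is a bijection, $(x_z,y_z)=(x_{z'},y_{z'})$ is equivalent to $z=z'$, so the product of Kronecker deltas collapses to $\delta_{z,z'}$, and the right-hand side equals $\Id_{f\otimes g}(z,z')$ by definition.

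For (i), I would fix $(z,z'')\in\gamma(\s_f\times\s_g)\times\gamma(\s_{f''}\times\s_{g''})$ and expand the left-hand side using the composition formula \eqref{composition}:
\[
((F'\otimes G')\circ(F\otimes G))(z,z'') \;=\; \sum_{z'\in\gamma(\s_{f'}\times\s_{g'})} (F'\otimes G')(z',z'')\circ(F\otimes G)(z,z').
\]
Apply the tensor-of-morphisms formula to each factor to rewrite the $z'$-summand as
\[
\bigl[F'(x_{z'},x_{z''})\otimes G'(y_{z'},y_{z''})\bigr]\circ\bigl[F(x_z,x_{z'})\otimes G(y_z,y_{z'})\bigr].
\]
Since $\rc$ is strict monoidal, the interchange law applies in $\rc$ and this product equals
\[
\bigl[F'(x_{z'},x_{z''})\circ F(x_z,x_{z'})\bigr]\otimes\bigl[G'(y_{z'},y_{z''})\circ G(y_z,y_{z'})\bigr].
\]

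The last step is to reindex the sum. Using $\gamma^{-1}$ as a bijection between $\gamma(\s_{f'}\times\s_{g'})$ and $\s_{f'}\times\s_{g'}$, I would rewrite the sum over $z'$ as a double sum over $(u,v)\in\s_{f'}\times\s_{g'}$, and then invoke bilinearity of $\otimes$ on morphisms in $\rc$ to separate it into
\[
\Bigl(\sum_{u\in\s_{f'}}F'(u,x_{z''})\circ F(x_z,u)\Bigr)\otimes\Bigl(\sum_{v\in\s_{g'}}G'(v,y_{z''})\circ G(y_z,v)\Bigr),
\]
which by \eqref{composition} is precisely $(F'\circ F)(x_z,x_{z''})\otimes(G'\circ G)(y_z,y_{z''}) = ((F'\circ F)\otimes(G'\circ G))(z,z'')$. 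The main obstacle is purely notational: keeping track of which copy of $\gamma^{-1}$ produces which $x$'s and $y$'s, and justifying the interchange of the finite sum with the tensor product via bilinearity; no genuine difficulty beyond careful indexing arises.
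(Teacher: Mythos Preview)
Your proposal is correct and follows essentially the same route as the paper's proof: both parts are handled by pointwise evaluation, with (ii) reduced to collapsing the product of Kronecker deltas via the injectivity of $\gamma$, and (i) proved by expanding the composition sum, applying the interchange law in $\rc$, and then reindexing over $\s_{f'}\times\s_{g'}$ via $\gamma^{-1}$ together with bilinearity of $\otimes$ to split the sum. The only cosmetic difference is that the paper treats (i) before (ii).
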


\begin{proof}
 (i) For $z\in \gamma(\s_f\times \s_g)$ and $z''\in \gamma(\s_{f''}\times \s_{g''})$
  we have
\begin{equation}
\begin{split}
((F'\otimes G')\circ (F\otimes G))(z,z'')&=\!\!\!\!\!\sum_{z'\in
\gamma(\s_{f'}\times \s_{g'})}\!\!\!\!\!(F'\otimes G')(z',z'')\circ
(F\otimes G)(z,z')\\
&=\!\!\!\!\!\sum_{z'\in \gamma(\s_{f'}\times
\s_{g'})}\!\!\!\!\!(F'(x'_{z'},x''_{z''})\circ F(x_z,x'_{z'})) \\
&\quad \otimes
(G'(y'_{z'},y''_{z''})\circ G(y_z,y'_{z'}))\\
&=(\sum_{x'\in \s_{f'}}F'(x',x''_{z''})\circ F(x_z,x'))\\
& \quad \otimes
(\sum_{y'\in \s_{g'}}G'(y',y''_{z''})\circ G(y_z,y'))\\
&=(F'\circ F)(x_z,x''_{z''})\otimes (G'\circ G)(y_z,y''_{z''})\\
&=((F'\circ F)\otimes (G'\circ G))(z,z'')
\end{split}
\end{equation}
The third equality follows from the fact that $\gamma$ establishes a
bijection between $\s_{f'}\times \s_{g'}$ and
$\gamma(\s_{f'}\times
\s_{g'})$.

(ii) For $z,z'\in \gamma(\s_f\times \s_g)$ we have
\begin{equation}
\begin{split}(\Id_f\otimes \Id_g)(z,z')&=\Id_f(x_z,x_{z'})\otimes
\Id_g(y_z,y_{z'})\\
&= \delta_{x_z,x_{z'}}\id_{f(x_z)} \otimes
\delta_{y_z,y_{z'}}\id_{g(y_z)}\\
&= \delta_{z,z'}\id_{f(x_z)} \otimes \id_{g(y_z)}\\
&= \delta_{z,z'}\id_{f(x_z)\otimes g(y_z)}\\
&=\delta_{z,z'}\id_{(f\otimes g)(z)}\\
&=\Id_{f\otimes g}(z,z')
\end{split}
\end{equation}
\end{proof}

\begin{lem}
The associative constraint $A$ defined above is a natural
isomorphism that satisfies the Pentagonal Axiom.
\end{lem}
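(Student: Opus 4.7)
The plan is to establish the three claims --- that $A_{f,g,h}$ is an isomorphism, that it is natural in all three variables, and that it satisfies the pentagon --- by systematically exploiting the fact that both $A$ and $A^{-1}$ act essentially as Kronecker deltas paired with identity morphisms of the underlying $\rc$-objects, together with the strict monoidal structure of $\rc$. The invertibility was already noted upon defining $A^{-1}$, but I would verify it cleanly by applying the composition formula \eqref{composition}: the sum $(A_{f,g,h}\circ A^{-1}_{f,g,h})(w,w')=\sum_{v}A_{f,g,h}(v,w')\circ A^{-1}_{f,g,h}(w,v)$ reduces to a single nonzero term because the bijectivity of $\gamma$ means that the triple $(x_v,y_v,z_v)$ determines $v$ uniquely. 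What survives is $\delta^{w,w'}_{x;y;z}\id_{f(x_w)\otimes g(y_w)\otimes h(z_w)}$, which is exactly $\Id_{f\otimes(g\otimes h)}(w,w')$; the opposite composition is symmetric.

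For naturality, given morphisms $F:f\en f'$, $G:g\en g'$, $H:h\en h'$, I would evaluate both sides of the square at a generic pair $(v,w')\in \s_{(f\otimes g)\otimes h}\times \s_{f'\otimes(g'\otimes h')}$. On the side $A_{f',g',h'}\circ((F\otimes G)\otimes H)$, the sum over $v'\in \s_{(f'\otimes g')\otimes h'}$ collapses via the Kronecker deltas in $A_{f',g',h'}$ to the unique index with $(x_{v'},y_{v'},z_{v'})=(x_{w'},y_{w'},z_{w'})$, producing
\[
F(x_v,x_{w'})\otimes G(y_v,y_{w'})\otimes H(z_v,z_{w'}).
\]
On the other side, $((F\otimes(G\otimes H))\circ A_{f,g,h})(v,w')$ collapses analogously through the deltas in $A_{f,g,h}$ and yields the identical expression, so naturality follows.

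For the pentagon, I would evaluate both sides of
\[
A_{f,g,h\otimes i}\circ A_{f\otimes g,h,i}\;=\;(\Id_f\otimes A_{g,h,i})\circ A_{f,g\otimes h,i}\circ (A_{f,g,h}\otimes \Id_i)
\]
at a generic pair $(v,w)\in \s_{((f\otimes g)\otimes h)\otimes i}\times \s_{f\otimes(g\otimes(h\otimes i))}$. Using the notation $(x_v,y_v,z_v,t_v)$ and $(x_w,y_w,z_w,t_w)$ introduced for the iterated $\gamma^{-1}$'s, each factor contributes a product of Kronecker deltas on a subset of these coordinates; the bijectivity of $\gamma$ makes every intermediate sum collapse to exactly one term. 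Both sides therefore reduce to
\[
\delta_{x_v,x_w}\delta_{y_v,y_w}\delta_{z_v,z_w}\delta_{t_v,t_w}\,\id_{f(x_v)\otimes g(y_v)\otimes h(z_v)\otimes i(t_v)},
\]
and the strict monoidal structure of $\rc$ makes the parenthesisations of this fourfold tensor product genuinely equal, so the two sides coincide.

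The main obstacle will be purely notational: keeping track of the several interleaved compositions of $\gamma^{-1}$ with identities so that the coordinates extracted on each side of the pentagon genuinely match. Once the indexing conventions established before the definition of $A$ are applied consistently, each of the three verifications is a mechanical reduction of sums of Kronecker deltas to an identity already holding in $\rc$.
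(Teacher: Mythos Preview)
Your proposal is correct and follows essentially the same approach as the paper: both verify naturality and the Pentagon Axiom by evaluating each side at a generic index pair, letting the Kronecker deltas collapse the intermediate sums to a single term, and then invoking the strict monoidality of $\rc$ to identify the resulting morphisms. The only cosmetic difference is that you spell out the invertibility check explicitly, whereas the paper simply refers back to the formula for $A^{-1}$ given earlier.
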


\begin{proof} We already saw that $A$ is an isomorphism. To show that
  it is natural, we have, on the one hand,
\begin{equation}
\begin{split}
((F\otimes (G\otimes H))\circ A_{f,g,h})(v,w')&=\!\!\!\!\!\sum_{w\in
\gamma(\s_f\times \gamma(\s_g\times \s_h))}\!\!\!\!\!(F\otimes (G\otimes
H))(w,w')\circ\\
& \qquad\qquad\circ A_{f,g,h}(v,w)\\
&=\!\!\!\!\!\sum_{w\in \gamma(\s_f\times \gamma(\s_g\times
\s_h))}\!\!\!\!\!(F(x_w,x_{w'})\otimes G(y_w,y_{w'})\otimes \\
&\qquad\qquad \otimes H(z_w,z_{w'}))\circ \delta^{v,w}_{x;y;z}
\id_{f(x_{v})\otimes
g(y_{v})\otimes h(z_v)}\\
&=F(x_v,x_{w'})\otimes G(y_v,y_{w'}) \otimes H(z_v,z_{w'})\,.
\end{split}
\end{equation}
On the other hand, we have
\begin{equation}
\begin{split}
(A_{f',g',h'}\circ ((F\otimes G)\otimes H))(v,w')&=\sum_{v'\in
\gamma(\gamma(\s_{f'}\times \s_{g'})\times
\s_{h'})}\!\!\!\!\!\!\!\!\!\!\!\!\!\!\!\!A_{f',g',h'}(v',w')\circ ((F\otimes G)\otimes\\
&\quad H)(v,v')\\
&=\sum_{v'\in \gamma(\gamma(\s_{f'}\times \s_{g'})\times
\s_{h'})}\!\!\!\!\!\!\!\!\!\!\!\!\!\!\!\!\delta^{v',w'}_{x';y';z'}
\id_{f'(x'_{v'})\otimes
g'(y'_{v'})\otimes h'(z'_{v'})}\circ \\
&\quad \circ (F(x_v,x'_{v'})\otimes G(y_v,y'_{v'})\otimes H(z_v,z'_{v'}))\\
&=F(x_v,x_{w'})\otimes G(y_v,y_{w'}) \otimes H(z_v,z_{w'}).
\end{split}
\end{equation}
Therefore, $A_{f',g',h'}\circ ((F\otimes G)\otimes H)=(F\otimes
(G\otimes H))\circ A_{f,g,h}$, so $A$ is natural.

Let us prove now that $A$ satisfies the Pentagonal Axiom. Set
$M(s,w)=((\id_f\otimes A_{g,h,i})\circ A_{f,g\otimes h,i}\circ
(A_{f,g,h}\otimes \id_i))(s,w)$. For $s\in
\gamma(\gamma(\gamma(\s_f\times \s_g)\times \s_h)\times \s_i)$ and
$w\in \gamma(\s_f\times \gamma(\s_g\times \gamma(\s_h\times
\s_i)))$, we have
\begin{equation}
\begin{split}
M(s,w)&=\!\!\!\!\!\sum_{\substack{u\in \s_{f\otimes ((g\otimes
h)\otimes i)}\\
v\in \s_{f\otimes ((g\otimes h)\otimes i)}}}\!\!\!\!\!(\id_f\otimes
A_{g,h,i})(v,w)\circ A_{f,g\otimes h,i}(u,v)\circ
(A_{f,g,h}\otimes \id_i)(s,u)\\
&=\!\!\!\!\!\sum_{\substack{u\in \s_{f\otimes ((g\otimes
h)\otimes i)}\\
v\in \s_{f\otimes ((g\otimes h)\otimes
i)}}}\!\!\!\!\!(\delta_{x_v,x_w}\id_{f(x_v)}\otimes
\delta^{v,w}_{y;z;t}\id_{g(y_v)\otimes h(z_v)\otimes i(t_v)})\circ
\delta^{u,v}_{x;y;z;t}\\
&\quad \id_{f(x_u)\otimes g(y_u)\otimes h(z_u)\otimes i(t_u)}\circ
(\delta^{s,u}_{x;y;z}\id_{f(x_s)\otimes g(y_s)\otimes
h(z_s)}\otimes \delta_{t_s,t_u}\id_{i(t_s)}) \\
&=\delta^{s,w}_{x;y;z;t}\id_{f(x_s)\otimes g(y_s)\otimes
h(z_s)\otimes i(t_s)}.
\end{split}
\end{equation}
Set $N(s,w)=(A_{f,g,h\otimes i}\circ A_{f\otimes g,h,i})(s,w)$.
Then,
\begin{equation}
\begin{split}
N(s,w)&=\!\!\!\!\!\sum_{r\in \s_{(f\otimes g)\otimes (h\otimes
i)}}\!\!\!\!\!(A_{f,g,h\otimes i})(r,w)\circ (A_{f\otimes g,h,i})(s,r)\\
&=\!\!\!\!\!\sum_{r\in \s_{(f\otimes g)\otimes (h\otimes
i)}}\!\!\!\!\!\delta^{r,w}_{x;y;z;t}\id_{f(x_r)\otimes g(y_r)\otimes
h(z_r)\otimes i(t_r)}\circ
\delta^{s,r}_{x;y;z;t}\id_{f(x_s)\otimes g(y_s)\otimes
h(z_s)\otimes i(t_s)}\\
&=\delta^{s,w}_{x;y;z;t}\id_{f(x_s)\otimes g(y_s)\otimes
h(z_s)\otimes i(t_s)}.
\end{split}
\end{equation}
Thus $M(s,w)=N(s,w)$ and so, $A$ satisfies the Pentagonal Axiom.
\end{proof}

\begin{lem} The right unit $R$ and the left unit $L$ are natural isomorphisms.
\end{lem}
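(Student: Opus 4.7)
The invertibility of $R_f$ and $L_f$ is already in hand, since explicit two-sided inverses $R_f^{-1}$ and $L_f^{-1}$ were written down right after their definitions; a direct expansion via (\ref{composition}) confirms $R_f \circ R_f^{-1} = \Id_f$ and $R_f^{-1}\circ R_f = \Id_{f\otimes \I}$ (and analogously for $L$), the Kronecker deltas in both factors collapsing the sum to a single identity term. Hence the real content of the lemma is the naturality of $R$ and $L$.

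For $R$, my plan is to check $R_{f'} \circ (F \otimes \Id_\I) = F \circ R_f$ for an arbitrary morphism $F\colon f \to f'$ of $\rcs$ by evaluating both sides at a point $(z, x') \in \gamma(\s_f \times \{\ast\}) \times \s_{f'}$. On the left, (\ref{composition}) expresses the composite as a sum over $z' \in \gamma(\s_{f'} \times \{\ast\})$; the Kronecker delta in $R_{f'}(z', x') = \delta_{x_{z'}, x'} \id_{f'(x_{z'})}$ collapses this sum to the single term where $x_{z'} = x'$. Strictness of $\rc$ enters here to identify $F(x_z, x_{z'}) \otimes \Id_\I(\ast, \ast) = F(x_z, x_{z'}) \otimes \id_\I$ with $F(x_z, x_{z'})$, reducing the left side to $F(x_z, x')$. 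On the right, expanding $F \circ R_f$ by (\ref{composition}) gives a sum over $x \in \s_f$ which collapses via the delta in $R_f(z, x) = \delta_{x_z, x} \id_{f(x_z)}$ to the same expression $F(x_z, x')$. The argument for $L$ is entirely parallel, using $\id_\I \otimes F(x_z, x_{z'}) = F(x_z, x_{z'})$ in place of $F(x_z, x_{z'}) \otimes \id_\I$.

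I do not expect any genuine structural obstacle. The only delicate step is the indexing bookkeeping around the bijection $\gamma$ and the associated variables $x_z, x_{z'}$: each point of $\gamma(\s_f \times \{\ast\})$ must be unpacked into its preimage before the definitions of $R_f$ and of the tensor product of morphisms can be applied, and one must check that the unique $z'$ surviving the collapse on the left-hand side does correspond under $\gamma$ to the chosen $x' \in \s_{f'}$. The finite-support clause (ii) in the definition of a morphism of $\rcs$ ensures that all formal sums are finite, so no convergence issue arises.
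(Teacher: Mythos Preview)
Your proposal is correct and follows essentially the same approach as the paper: both arguments note that invertibility was already established, then verify naturality of $R$ by evaluating $F\circ R_f$ and $R_{f'}\circ(F\otimes \Id_\I)$ at a generic point $(z,x')$, collapsing each sum via the Kronecker deltas and invoking strictness of $\rc$ to identify $F(x_z,x'_{z'})\otimes \id_\I$ with $F(x_z,x'_{z'})$, with the case of $L$ declared analogous.
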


\begin{proof} We already saw that $R_f$ is an isomorphism. For $z\in
\gamma(\s_f\times \ast)$ and $x'\in \s_{f'}$ we have
\begin{equation}
\begin{split}
(F\circ R_f)(z,x')&=\sum_{x\in \s_f} F(x,x')\circ R_{f}(z,x)\\
&=F(x,x')\circ \delta_{x_z,x'}\id_{f(x_z)}\\
&=F(x_z,x').
\end{split}
\end{equation}
On the other hand
\begin{equation}
\begin{split}
(R_{f'}\circ (F\otimes \Id_{\I}))(z,x')&=\sum_{z'\in
\gamma(\s_{f'}\times \ast)}R_{f'}(z',x')\circ (F\otimes
\Id_{\I})(z,z')\\
&= \delta_{x'_{z'},x'}\id_{f(x'_{z'})}\circ (F(x_z,x'_{z'})\otimes
\Id_{\I}(\ast,\ast))\\
&= \delta_{x'_{z'},x'}\id_{f(x'_{z'})}\circ F(x_z,x'_{z'}) \\
&= F(x_z,x')\,.
\end{split}
\end{equation}

The proof for $L$ is analogous.
\end{proof}

\begin{lem}
The morphisms $A$, $R$ and $L$ satisfy the Triangular Axiom.
\end{lem}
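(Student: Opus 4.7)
My plan is to verify the Triangular Axiom
$$(\id_f\otimes L_g)\circ A_{f,\I,g}=R_f\otimes \id_g$$
by evaluating both sides on a pair $(s,w)$ with $s\in\gamma(\gamma(\s_f\times\{\ast\})\times \s_g)$ and $w\in\gamma(\s_f\times\s_g)$, using the explicit formulas for $A$, $R$, $L$, and the tensor product of morphisms, just as was done for the Pentagonal Axiom in the preceding lemma. Since the unit index contributes nothing but a trivial factor $\ast$, the Kronecker-delta manipulations should collapse every sum to a single surviving term.

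First I would compute the left-hand side. Writing $u\in \s_{f\otimes(\I\otimes g)}=\gamma(\s_f\times\gamma(\{\ast\}\times\s_g))$ as the intermediate variable, I use that $A_{f,\I,g}(s,u)=\delta^{s,u}_{x;\ast;z}\,\id_{f(x_s)\otimes \I\otimes g(z_s)}$ and that $(\id_f\otimes L_g)(u,w)=\id_f(x_u,x_w)\otimes L_g((\ast,z)_u,z_w)=\delta_{x_u,x_w}\delta_{z_u,z_w}\,\id_{f(x_u)\otimes g(z_u)}$. The sum over $u$ collapses to the single index for which $(x_u,\ast,z_u)=(x_s,\ast,z_s)$, leaving
$$\big((\id_f\otimes L_g)\circ A_{f,\I,g}\big)(s,w)=\delta_{x_s,x_w}\delta_{z_s,z_w}\,\id_{f(x_s)\otimes g(z_s)}.$$

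Next I would compute the right-hand side directly from the definitions of $R_f$ and of the tensor product of morphisms:
$$(R_f\otimes \id_g)(s,w)=R_f((x,\ast)_s,x_w)\otimes \id_g(z_s,z_w)=\delta_{x_s,x_w}\id_{f(x_s)}\otimes \delta_{z_s,z_w}\id_{g(z_s)},$$
which equals $\delta_{x_s,x_w}\delta_{z_s,z_w}\,\id_{f(x_s)\otimes g(z_s)}$ by the same identity $\id_V\otimes \id_W=\id_{V\otimes W}$ that was used in part (ii) of the first lemma. Comparing with the previous display gives the axiom.

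The only real obstacle is keeping the notation for the iterated $\gamma^{-1}$ straight, specifically distinguishing between $\gamma(\s_f\times\{\ast\})\times \s_g$ on the source side and $\s_f\times\gamma(\{\ast\}\times\s_g)$ on the target side of $A_{f,\I,g}$; once the triple $(x,\ast,z)$ is used uniformly as the underlying index, the bijectivity of $\gamma$ makes the sums evaporate just as in the proof of the Pentagonal Axiom, and no further structural input is needed.
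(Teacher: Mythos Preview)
Your proposal is correct and follows essentially the same approach as the paper: both evaluate $(\Id_f\otimes L_g)\circ A_{f,\I,g}$ at a generic pair, collapse the sum over the intermediate index using the Kronecker deltas from $A_{f,\I,g}$, and compare the result with the direct evaluation of $R_f\otimes \Id_g$. The only differences are cosmetic (your variables $(s,z)$ versus the paper's $(v,y)$, and your explicit mention of $\id_V\otimes\id_W=\id_{V\otimes W}$), so there is nothing substantive to add.
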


\begin{proof} Set $P(v,w)=((\Id_f\otimes L_g)\circ A_{f,\I,g})(v,w)$.
Then
\begin{equation}
\begin{split}
P(v,w)&=\sum_{u\in S_{f\otimes
(\I\otimes g)}}(\Id_f\otimes L_g)(u,w)\circ A_{f,\I,g}(v,u)\\
&=(\delta_{x_u,x_w}\id_{f(x_u)}\otimes
\delta_{y_u,y_w}\id_{g(y_u)})\circ
\delta^{v,u}_{x;y}\id_{f(x_v)\otimes \I(\ast)\otimes g(y_v)}\\
&=\delta_{x_v,x_w} \id_{f(x_v)}\otimes
\delta_{y_v,y_w}\id_{g(y_v)}\\
&=(R_f\otimes \Id_g)(v,w).
\end{split}
\end{equation}
 So $(\Id_f\otimes L_g)\circ A_{f,\I,g}= R_f\otimes \Id_g$.
\end{proof}

These four lemmas finish the proof of \ref{teorema: construccion rcs}

\begin{prop}\label{funtor inclusion}
The category $\rc^{\textsf{S}_0}$ has a full subcategory, which is
tensor equivalent to $\rc$.
\end{prop}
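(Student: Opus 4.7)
The plan is to exhibit a fully faithful strong monoidal functor $\Phi : \rc \en \rcs$; its image then serves as the required full subcategory. Using the basepoint $\ast \in \s_0$ already chosen in the definition of $\I$, I would set $\Phi(V) = V_\ast$, where $V_\ast : \{\ast\} \en \ob$ is the function with $V_\ast(\ast) = V$, and $\Phi(\phi) : V_\ast \en W_\ast$ is the two-variable function on $\{\ast\} \times \{\ast\}$ with $\Phi(\phi)(\ast,\ast) = \phi$. Functoriality and additivity then follow at once from the composition formula (\ref{composition}), which in this case has only a single summand.

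Let $\mathcal{D}$ be the full subcategory of $\rcs$ spanned by the objects $V_\ast$. Since every morphism $F : V_\ast \en W_\ast$ is a function on the singleton $\{\ast\} \times \{\ast\}$ and is therefore determined by the single element $F(\ast,\ast) \in \Hom_\rc(V,W)$, the induced map on Hom-groups is a group isomorphism, and $\Phi$ is bijective on objects onto $\mathcal{D}$. Hence $\Phi : \rc \en \mathcal{D}$ is an equivalence (in fact an isomorphism) of Ab-categories.

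To upgrade this to a tensor equivalence I would equip $\Phi$ with monoidal coherence data. The unit morphism $\phi_0 : \I_{\rcs} \en \Phi(\I_\rc)$ is literally the identity, since $\I_{\rcs}$ was defined as $\I_\ast = \Phi(\I_\rc)$. For the tensor, $\Phi(V) \otimes \Phi(W)$ is the function on the singleton $\gamma(\{\ast\} \times \{\ast\})$ with value $V \otimes W$, whereas $\Phi(V \otimes W)$ is the function on $\{\ast\}$ with the same value; I define
\begin{equation*}
\phi_{V,W}\bigl(\gamma(\ast,\ast), \ast\bigr) = \id_{V \otimes W},
\end{equation*}
which is visibly an isomorphism in $\rcs$ with the evident inverse.

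The main work, and really the only place where a calculation is needed, consists of checking naturality of $\phi_{V,W}$ together with the associativity hexagon and the two unit triangles relating $\phi_{V,W}$, $\phi_0$ with $A$, $L$ and $R$. These verifications parallel the four lemmas preceding the proposition: on the one-point indexing sets arising from $\gamma$, the constraints $A$, $L$ and $R$ act through Kronecker deltas that collapse to identities, and strictness of $\rc$ then forces both sides of each coherence diagram to equal $\id$ on the appropriate iterated tensor of $\rc$. Once this is done, $(\Phi, \phi_{V,W}, \phi_0)$ is a strong monoidal equivalence $\rc \simeq \mathcal{D}$, proving the proposition.
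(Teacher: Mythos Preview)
Your proposal is correct and follows essentially the same approach as the paper: define a fully faithful functor into $\rcs$ sending each $V$ to a one-point function with value $V$, and equip it with the obvious monoidal constraint given by identity morphisms in $\rc$. The only cosmetic difference is that the paper allows a separate basepoint $x_V\in\s_0$ for each object $V$ (with $x_\I=\ast$), whereas you use the single point $\ast$ throughout; both leave the routine verification of the coherence diagrams to the reader.
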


\begin{proof}  Recall that a tensor
functor is a triple $(F,\varphi_0,\varphi_2)$, where $F$ is a
functor, $\varphi_0$ is an isomorphism from $\I$ to $F(\I)$, and
$\varphi_2(U,V):F(U)\otimes F(V)\en F(U\otimes V)$ is a family of
natural isomorphisms compatible with the associative constraint and
the left and right units (see \cite[p.287]{Ka95}). Define a functor
$J:\rc \en \rc^{\textsf{S}_0}$, by choosing for any object $V$ in
$\rc$ any point $x_V\in \s_0$ and a function $f_V:\{x_V\}\en
\Obj(\rc)$, given by $f_V(x_V)=V$. Then we define $J(V)=f_V$. To any
morphism $\alpha:V\en W$ we assign the function
$F_{\alpha}(x_V,x_W)=\alpha:f_V(x_V)=V\en f_W(x_W)=W$ and then
define $J(\alpha)=F_{\alpha}$. For the unit object $\I$ of $\rc$ we
choose the fixed point $\ast$ as before, so that $J(\I)=\I\in
\Obj(\rcs)$. For $U,V$ objects of $\rc$, define
$\varphi_2(U,V):J(U)\otimes J(V)=f_U\otimes f_V \en J(U\otimes
V)=f_{U\otimes V}$, as follows. If $\gamma^{-1}(\{x_U\}\times
\{x_V\})=\{x'_{U,V}\}$, then $(f_U\otimes f_V)(x'_{U,V})=U\otimes V$
and $f_{U\otimes V}(x_{U\otimes V})=U\otimes V$, then take
$\varphi_2(U,V)(x'_{U,V},(x_{U\otimes V}))=\id_{U\otimes V}$. The
morphisms $\varphi_0$ and $\varphi_2$ are identities, so that the
functor $J$ is strict, and it is straightforward to prove that they
satisfy the required compatibility conditions.
\end{proof}

\subsection{Extending the braiding and the twist}

Let us now assume that the category $\rc$ is braided with braiding
$\ce$. For $v\in \gamma(\s_f\times \s_g)$ and $w\in
\gamma(\s_g\times \s_f)$, define $\Ce_{f,g}(v,w)$ by
\begin{equation}
\begin{split}
\Ce_{f,g}(v,w)=\delta^{v,w}_{x;y}\ce_{f(x_v),g(y_v)}:(f\otimes
g)(v)=f(x_v)\otimes g(y_v) &\en g(y_w)\otimes f(x_w) \\
                           &=(g\otimes f)(w)\,.
\end{split}
\end{equation}
It is clear that $\Ce_{f,g}$ is invertible with inverse given by
$\Ce_{f,g}^{-1}(w,v)=\delta^{w,v}_{x;y}\ce^{-1}_{f(x_w),g(y_w)}$.

\begin{prop}
The family $\Ce$ of isomorphisms $\Ce_{f,g}$ is a braiding in the category
$\mathcal{C}^{\textsf{S}_0}$.
\end{prop}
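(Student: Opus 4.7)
The plan is to verify (a) that each $\Ce_{f,g}$ is natural in both arguments and (b) that the family satisfies the two hexagon axioms for a braiding in a nonstrict monoidal category. Invertibility is already observed in the statement, so no further work is needed there.

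First I would establish naturality. Given morphisms $F:f\en f'$ and $G:g\en g'$, I need to check $\Ce_{f',g'}\circ(F\otimes G)=(G\otimes F)\circ\Ce_{f,g}$. Evaluating at a pair $(v,w')\in\gamma(\s_f\times\s_g)\times\gamma(\s_{g'}\times\s_{f'})$, the left-hand side unfolds, after summing over $v'\in\gamma(\s_{f'}\times\s_{g'})$, to a single nonzero term coming from $\delta^{v',w'}_{x';y'}$; it equals $\ce_{f'(x'_{w'}),g'(y'_{w'})}\circ(F(x_v,x'_{w'})\otimes G(y_v,y'_{w'}))$. The right-hand side, after summing over $w\in\gamma(\s_g\times\s_f)$, collapses via $\delta^{v,w}_{x;y}$ to $(G(y_v,y'_{w'})\otimes F(x_v,x'_{w'}))\circ\ce_{f(x_v),g(y_v)}$. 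Equality now follows from the naturality of $\ce$ in $\rc$ applied to $F(x_v,x'_{w'})$ and $G(y_v,y'_{w'})$.

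Next I would verify the first hexagon axiom
\[
A_{g,h,f}\circ\Ce_{f,g\otimes h}\circ A_{f,g,h}=(\Id_g\otimes\Ce_{f,h})\circ A_{g,f,h}\circ(\Ce_{f,g}\otimes\Id_h),
\]
and analogously for the second. The strategy is the same bookkeeping calculation used for the pentagon in the previous lemma: evaluate both sides at a pair $(s,w)$ with $s\in\s_{(f\otimes g)\otimes h}$ and $w\in\s_{g\otimes(h\otimes f)}$, expand each composite with the defining formulas, and observe that every intermediate summation is killed by a $\delta$-factor introduced by $A$ or $\Ce$. On both sides the only surviving contribution is of the form
\[
\delta^{s,w}_{x;y;z}\,\bigl(\id_{g(y_s)}\otimes\ce_{f(x_s),h(z_s)}\bigr)\circ\bigl(\ce_{f(x_s),g(y_s)}\otimes\id_{h(z_s)}\bigr),
\]
or its mirror, and these coincide exactly because $\ce$ satisfies the corresponding hexagon in the strict category $\rc$ (where associators are identities). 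The second hexagon is handled by the same argument applied to $\ce^{-1}$.

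The only real obstacle is the bookkeeping with the bijection $\gamma$: every tensor factor sits in a $\gamma$-image, so the indices $(x_v,y_v,z_v)$ must be tracked carefully through each composite in the hexagon, and one must check that the multiple $\delta$'s introduced by the two $A$'s and the central $\Ce$ combine to the single $\delta^{s,w}_{x;y;z}$ expected on the right-hand side. Once this combinatorial collapse is written out, the categorical content reduces cleanly to the hexagon identity in $\rc$.
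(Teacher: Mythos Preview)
Your proposal is correct and follows essentially the same route as the paper: reduce naturality to naturality of $\ce$ in $\rc$ via the $\delta$-collapse, and reduce each hexagon to the corresponding hexagon identity in $\rc$ after the same bookkeeping. One small sharpening: the left-hand side $A_{g,h,f}\circ\Ce_{f,g\otimes h}\circ A_{f,g,h}$ collapses not to the two-fold composite you wrote but to the single braiding $\delta^{s,w}_{x;y;z}\,\ce_{f(x_s),\,g(y_s)\otimes h(z_s)}$, and it is precisely the hexagon in $\rc$ that equates this with the composite $(\id\otimes\ce)\circ(\ce\otimes\id)$ arising on the right-hand side.
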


\begin{proof}
We have to prove that $\Ce$ is natural and satisfies the
Hexagonal Axiom.
 For $F:f\en f'$ and $G:g\en g'$ we have, on the one hand
\begin{equation}
\begin{split}
((G\otimes F)\circ \Ce_{f,g})(v,w') & =\sum_{w\in
\gamma(\s_g\otimes
\s_f)}(G\otimes F)(w,w')\circ \Ce_{f,g}(v,w)\\
&=\sum_{w\in \gamma(\s_g\otimes \s_f)}(G(y_w,y_{w'})\otimes
F(x_w,x_{w'}))\circ \delta^{v,w}_{x;y}\ce_{f(x_v),g(y_v)}\\
&=(G(y_v,y_{w'})\otimes F(x_v,x_{w'}))\circ \ce_{f(x_v),g(y_v)}.
\end{split}
\end{equation}
On the other hand,
\begin{equation}
\begin{split}
\Ce_{f',g'}\circ (F\otimes G)(v,w')&=\sum_{v'\in
\gamma(\s_{f'}\times \s_{g'})} \Ce_{f',g'}(v',w')\circ (F\otimes
G)(v,v')\\
&=\sum_{v'\in \gamma(\s_{f'}\times \s_{g'})}
\delta^{v',w'}_{x;y}\ce_{f'(x'_{v'}),g'(y'_{v'})}\circ
F(x_v,x'_{v'})\otimes G(y_v,y_{v'})\\
&=\ce_{f'(x'_{w'}),g'(y'_{w'})}\circ (F(x_v,x'_{w'})\otimes
G(y_v,y_{w'})).
\end{split}
\end{equation}
Both sums are equal since $\ce$ is a braiding in $\rc$ and
therefore it is natural. Thus $\Ce$ is natural. We now show the
commutativity of one of the diagrams of the Hexagonal Axiom. Put
$M(w,w')=(A_{g,h,f}\circ \Ce_{f,g\otimes h}\circ
A_{f,g,h})(w,w')$. Then
\begin{equation}
\begin{split}
M(w,w')&=\sum_{\substack{u\in \s_{(f\otimes g)\otimes h}\\ v\in
\s_f\otimes (g\otimes h)}}A_{f,g,h}(u,w')\circ \Ce_{f,g\otimes
h}(v,u)\circ A_{f,g,h}(w,v)\\
&=\sum_{\substack{u\in \s_{(f\otimes g)\otimes h}\\ v\in
\s_f\otimes (g\otimes h)}}\delta^{u,w'}_{x;y;z}
\id_{f(x_{u})\otimes g(y_{u})\otimes h(z_u)}\circ
\delta^{v,u}_{x;y;z}\ce_{f(x_v),g(y_{v})\otimes h(z_{v})}\\
&\quad \circ \delta^{w,v}_{x;y;z} \id_{f(x_{w})\otimes
g(y_{w})\otimes h(z_w)}\\
&=\delta^{w,w'}_{x;y;z}\ce_{f(x_w),g(y_{w})\otimes h(z_{w})}.
\end{split}
\end{equation}
Set $N(w,w')=((\Id_g\otimes \Ce_{f,h})\circ A_{g,f,h}\circ
(\Ce_{f,g}\otimes \Id_h))(w,w')$. Then
\begin{equation}
\begin{split}
N(w,w')&=\sum_{\substack{u\in \s_{g\otimes (f\otimes h)}\\ v\in
\s_{(g\otimes f)\otimes h}}}(\Id_g\otimes \Ce_{f,h})(u,w')\circ
A_{g,f,h}(v,u)\circ (\Ce_{f,g}\otimes \Id_h)(w,v)\\
&=\sum_{\substack{u\in \s_{g\otimes (f\otimes h)}\\ v\in
\s_{(g\otimes f)\otimes
h}}}(\delta_{y_u,y_{w'}}\id_{g(y_u)}\otimes
\delta^{u,{w'}}_{x;z}\ce_{f(x_{u}),h(z_{{u}})})\circ
\delta^{v,u}_{x;y;z}\id_{g(y_v)\otimes f(x_v)\otimes
h(z_v)}\\
&\quad \circ (\delta^{w,v}_{x;y}\ce_{f(x_{w}),g(y_{w})}\otimes
\delta_{z_w,z_v}\id_{h(z_w)})\\
&=\delta^{w,w'}_{x;y;z}(\id_{g(y_w)}\otimes
\ce_{f(x_{w}),h(z_{{w}})})\circ (\ce_{f(x_{w}),g(y_{w})}\otimes
\id_{h(z_w)}).
\end{split}
\end{equation}
Again, since $\ce$ is a strict braiding in $\rc$, we have the
equality $M(w,w')=N(w,w')$. The commutativity of the other hexagon
is proved analogously.
\end{proof}
 In the same way, if the category $\rc$ has a twist, then we can easily prove the following
 assertion.

 \begin{prop}
 Let $\theta$ be a twist for the the category $\rc$. Then the category
 $\rc^{\textsf{S}_0}$ has a twist $\Theta_f:f\en f$ given by
 \begin{equation}
 \Theta_f(x,y)=\delta_{x,y}\theta_{f(x)}:f(x)\en f(y)
\end{equation}
for any object $f$ in $\rc^{\textsf{S}_0}$. $\hbx$
 \end{prop}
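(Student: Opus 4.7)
The plan is to verify directly, from the definitions given in the excerpt, the three requirements for $\Theta$ to be a twist: that each $\Theta_f$ is a well-defined morphism in $\rcs$, that the family $\{\Theta_f\}_f$ is a natural isomorphism $\Id\Rightarrow\Id$, and that it satisfies the twist axiom $\Theta_{f\otimes g}=(\Theta_f\otimes\Theta_g)\circ\Ce_{g,f}\circ\Ce_{f,g}$ (interpreted, as everywhere else in the paper, modulo the equivalence relation on morphisms alluded to in the introduction). The computations are entirely parallel to those carried out for $\Ce$, $A$, $L$ and $R$ in the previous lemmas.

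First I would check that $\Theta_f:\s_f\times\s_f\to\mathcal{H}$ satisfies conditions (i) and (ii) of a morphism: (i) is immediate since $\theta_{f(x)}:f(x)\en f(x)$, and (ii) follows by taking $S^{\Theta_f}_x=\{x\}$, because $\Theta_f(x,y)=0$ for $y\neq x$. Invertibility is equally cheap: the function $\Theta'_f(x,y)=\delta_{x,y}\theta^{-1}_{f(x)}$ is a morphism for the same reason, and the composition formula \eqref{composition} immediately collapses the sum to a single term, yielding $\Theta_f\circ\Theta'_f=\Id_f=\Theta'_f\circ\Theta_f$.

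Next I would establish naturality. Given $F:f\en g$, using \eqref{composition},
\begin{equation*}
(F\circ\Theta_f)(x,y)=\sum_{z\in\s_f}F(z,y)\circ\delta_{x,z}\theta_{f(x)}=F(x,y)\circ\theta_{f(x)},
\end{equation*}
while
\begin{equation*}
(\Theta_g\circ F)(x,y)=\sum_{z\in\s_g}\delta_{z,y}\theta_{g(z)}\circ F(x,z)=\theta_{g(y)}\circ F(x,y),
\end{equation*}
and these coincide by naturality of $\theta$ in $\rc$ applied to the morphism $F(x,y):f(x)\en g(y)$.

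Finally, for the twist axiom, I would compute both sides evaluated at $(v,w')\in\gamma(\s_f\times\s_g)\times\gamma(\s_f\times\s_g)$. The left-hand side is $\Theta_{f\otimes g}(v,w')=\delta_{v,w'}\theta_{f(x_v)\otimes g(y_v)}$. For the right-hand side, a short calculation in the style of the braiding lemma shows $(\Ce_{g,f}\circ\Ce_{f,g})(v,w)=\delta^{v,w}_{x;y}\,\ce_{g(y_v),f(x_v)}\circ\ce_{f(x_v),g(y_v)}$ (only the $u$ with $\gamma^{-1}(u)=(y_v,x_v)$ contributes to the intermediate sum), and then composing with $\Theta_f\otimes\Theta_g$ collapses to $\delta_{v,w'}(\theta_{f(x_v)}\otimes\theta_{g(y_v)})\circ\ce_{g(y_v),f(x_v)}\circ\ce_{f(x_v),g(y_v)}$. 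The two sides agree because $\theta$ is already a twist in $\rc$, applied at the objects $f(x_v),g(y_v)$. The only mildly delicate point — and the one step that needs a little care — is bookkeeping the convention that $\gamma^{-1}$ of an element of $\gamma(\s_g\times\s_f)$ should be read in the order $(\s_g,\s_f)$ when matching indices in the sum, exactly as in the proof of naturality and the hexagon for $\Ce$; beyond that the verification is purely mechanical.
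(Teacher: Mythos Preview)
Your proposal is correct and is precisely the ``in the same way'' verification the paper alludes to but omits (the proposition is stated with a $\square$ and no proof). The only tiny omission is the check that $\Theta_\I=\Id_\I$, which is immediate from $\Theta_\I(\ast,\ast)=\theta_{\I(\ast)}=\theta_\I=\id_\I$.
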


\

 However, it is not possible to extend a duality from $\rc$ to
 $\rc^{\textsf{S}_0}$. Although we have for any $f:\s_f\en \Obj(\rc)$ a canonical
 candidate for $f^{\ast}:\s_f\en \Obj(\rc)$, namely the function $f^{\ast}$ defined by
 $f^{\ast}(x)=(f(x))^{\ast}$ as well as a canonical candidate for the
 evaluation $D_f:f^{\ast}\otimes f\en \I$, given by
  $D_f(v,\{\ast\})=
 \delta_{x^{\ast}_v,x_v}\de_{f(x_v)}:f^{\ast}(x^{\ast}_v)\otimes f(x_v)\en
 \I(\ast)=\I$, where $\gamma^{-1}(v)=(x^{\ast}_v,x_v)\in \s_f\times
 \s_f$, this is not the case for the coevaluation. Indeed, the
 canonical extension $B_f:\I\en f\otimes f^{\ast}$ given by
 $B_f(\ast,v)=\delta_{x^{\ast}_v,x_v}
 \be_{f(x_v)}:\I\en f(x_v)\otimes f^{\ast}(x^{\ast}_{v})$ is not a morphism in
 $\rc^{\textsf{S}_0}$ if $\s_f$ is infinite, since condition (ii) of
 page \pageref{def. de morfismo en cs} does not hold.

 Nevertheless, if we consider the full subcategory
 $\rc^{\textsf{S}_0}_\sharp$ which as objects has
 functions $f$ with finite domain $\s_f$, then it is possible to
 extend the duality according to the given formulas.
 It is easy to see that the inclusion functor $J:\rc\en \rc^{\textsf{S}_0}$ factors through
 $\rc^{\textsf{S}_0}_\sharp$, i.e.,
 \begin{equation}
 \xymatrix@1{J:\rc\,\, \ar @{^{(}->}[r]& \,\rc^{\textsf{S}_0}_\sharp\,\, \ar@ {^{(}->}[r]&
 \,\rc^{\textsf{S}_0}}
\end{equation}
The following assertion is also easy to prove.

\begin{prop}
 If the category $\rc$ is a ribbon category, then the extended structure in
 $\rc^{\textsf{S}_0}_\sharp$ is pivotal braided (but nonstrict in
 general, so it is not ribbon).

$\hbx$
 \end{prop}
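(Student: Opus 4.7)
The plan is to assemble the pivotal braided structure on $\rc^{\textsf{S}_0}_\sharp$ from pieces already built in the text, exploiting the fact that for $f$ in this subcategory $\s_f$ is finite, so condition (ii) of the morphism definition (page \pageref{def. de morfismo en cs}) is automatic. Consequently the canonical formulas displayed just above the proposition for the dual $f^{\ast}$, the evaluation $D_f:f^{\ast}\otimes f\en \I$ and, crucially, the coevaluation $B_f:\I\en f\otimes f^{\ast}$ all define legitimate morphisms of $\rc^{\textsf{S}_0}_\sharp$. The braiding $\Ce$ and twist $\Theta$ already extend to the whole of $\rc^{\textsf{S}_0}$ and therefore restrict to the subcategory, so the remaining work is the duality axioms, the construction of the pivot, and their monoidal compatibilities.

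Next I would verify the snake identities for $(D_f,B_f)$ modulo the equivalence on morphisms introduced at the end of Section \ref{seccion1}. Evaluated at $(x,x')\in \s_f\times \s_f$, the composite $(\Id_f\otimes D_f)\circ(\mathrm{assoc.})\circ (B_f\otimes \Id_f)$ expands into a finite sum which the Kronecker deltas collapse, leaving the pointwise composite $(\id_{f(x)}\otimes \de_{f(x)})\circ (\be_{f(x)}\otimes \id_{f(x)})=\id_{f(x)}$ in $\rc$; this yields $\delta_{x,x'}\id_{f(x)}=\Id_f(x,x')$, with the source/target mismatch induced by $A$, $L$, $R$ being exactly what the equivalence relation absorbs. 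The mirror snake is identical. For the pivot, the ribbon structure on $\rc$ supplies a canonical natural isomorphism $\pi_V:V\en V^{\ast\ast}$; define $\Pi_f(x,y)=\delta_{x,y}\pi_{f(x)}$, in direct analogy with the formula for $\Theta$. Naturality in $f$ and the monoidal compatibility $\Pi_{f\otimes g}\sim \Pi_f\otimes \Pi_g$ then reduce, delta for delta, to the corresponding identities for $\pi$ in $\rc$ by exactly the pattern used above for $\Ce$ and $\Theta$; compatibility with the braiding is inherited pointwise from $\rc$.

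The sole reason the outcome is pivotal braided and not ribbon is that ribbon categories in the conventions of \cite{Ka95} and \cite{Tu94} are required to be strict monoidal, whereas Theorem \ref{teorema: construccion rcs} shows that $\rc^{\textsf{S}_0}_\sharp$ inherits a genuinely nonstrict monoidal structure: the morphisms $A$, $L$, $R$ are not identities. The principal technical obstacle throughout is simply the careful bookkeeping of the bijection $\gamma:\s_0\times \s_0\en \s_0$ and its iterated preimages when comparing domains and codomains, but this is exactly the bookkeeping already developed in the preceding lemmas of this section and no new idea is required.
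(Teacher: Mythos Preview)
The paper does not actually supply a proof of this proposition: it is introduced with ``The following assertion is also easy to prove'' and closed immediately with the $\hbx$ symbol, so there is nothing to compare your attempt against. Your sketch is the natural one implied by the surrounding text---use the displayed formulas for $f^{\ast}$, $D_f$, $B_f$ (legitimate in $\rc^{\textsf{S}_0}_\sharp$ because finiteness of $\s_f$ makes condition (ii) vacuous), verify the zig-zag identities by the same delta-collapse pattern used for $A$, $\Ce$, $\Theta$, and lift the ribbon pivot $\pi_V:V\en V^{\ast\ast}$ pointwise via $\Pi_f(x,y)=\delta_{x,y}\pi_{f(x)}$---and it is correct in outline. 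Your explanation of why the result is only pivotal braided and not ribbon (nonstrictness of $A$, $L$, $R$, against the strictness convention in \cite{Tu94}) matches the paper's parenthetical remark exactly.

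One small caveat worth noting if you flesh this out: the monoidality check for $\Pi$ involves comparing $(f\otimes g)^{\ast\ast}$ with $f^{\ast\ast}\otimes g^{\ast\ast}$, and with the paper's definition $f^{\ast}(x)=(f(x))^{\ast}$ one has $(f\otimes g)^{\ast}(z)=(f(x_z)\otimes g(y_z))^{\ast}$ rather than $g(y_z)^{\ast}\otimes f(x_z)^{\ast}$. So the canonical isomorphism $\gamma_{V,W}:W^{\ast}\otimes V^{\ast}\en (V\otimes W)^{\ast}$ of $\rc$ (which the paper itself uses later in \eqref{iso.tens.dual.1}) must be inserted to mediate, and the monoidality of $\Pi$ then reduces pointwise to the standard compatibility of $\pi$ with $\gamma$ in a ribbon category. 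This is routine but is the one place where the bookkeeping goes slightly beyond ``delta for delta''.
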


\begin{Nota}
In order to simplify the next computations, we shall adopt the
following notation. Let $\mathcal{A}$ be the set of isomorphisms of
$\rcs$ generated by the set $(\Id_\chi, A^{\pm
1}_{\kappa,\lambda,\mu},R_\zeta, L_\varsigma)$ under tensor products
and compositions, where $\chi$, $\kappa$, $\lambda$, $\mu$, $\zeta$,
and $\varsigma$ are any objects in $\rcs$. In other words,
$\mathcal{A}$ is the set of isomorphisms that relate different
objects by associativity and units. If $F$ and $G$ are morphisms in
$\rcs$, we shall write $F\doteq G$ if $G=X\circ F\circ Y$, where $X$
and $Y$ are elements of $\mathcal{A}$. For example, $F\doteq G$ if
the following diagram commutes.
\begin{equation*}
\xymatrix{(((f_1\otimes f_2)\otimes f_3)\otimes f_4)\otimes
f_5\ar[r]^-{F}\ar[d]_{A_{f_1\otimes f_2,f_3,f_4}\otimes
\id_{f_5}}& (g_1\otimes
g_2)\otimes g_3\ar[ddd]^-{A_{g_1,g_2,g_3}}\\
((f_1\otimes f_2)\otimes (f_3\otimes f_4))\otimes
f_5\ar[d]_-{A_{f_1\otimes f_2,f_3\otimes f_4,f_5}}&\\
(f_1\otimes f_2)\otimes ((f_3\otimes f_4)\otimes
f_5)\ar[d]_-{\Id_{f_1\otimes f_2}\otimes A^{-1}_{f_3,f_4,f_5}}&\\
(f_1\otimes f_2)\otimes (f_3\otimes (f_4\otimes f_5))\ar[r]^-{G }&
g_1\otimes (g_2\otimes g_3)}
\end{equation*}
The relation $\doteq$ is an equivalence relation in the set of
morphisms of $\rcs$ which is compatible with composition and tensor
product in the sense that if $F\doteq G$ and $F'\doteq G'$ then
$F'\circ F\doteq G'\circ G$, if the compositions are defined, and
$F\otimes F'\doteq G\otimes G'$. Indeed, for the composition,
suppose $A\circ F\circ B= G$ and that $C\circ F'\circ D= G'$, for
elements $A$, $B$, $C$, and $D$ in $\mathcal{A}$. Then $ G'\circ
G=C\circ F'\circ D\circ A\circ F\circ B$. The morphism $D\circ A$ is
an endomorphism of the domain $s(F')$ of $F'$ which is equal to the
codomain $t(F)$ of $F$ and is an element of $\mathcal{A}$. Mac
Lane's coherence theorem states that this element has to be the
identity morphism $\Id_{s(F')}$. Hence $ G'\circ G=C\circ F'\circ
F\circ B$. The tensor part follows from the identity $(A\circ F\circ
B)\otimes (C\circ F'\circ D)=(A\otimes C)\circ (F\otimes F')\circ
(B\otimes D)$. In what follows we shall use this notation without
further comments.
\end{Nota}

\section{Bialgebras in $\rc^{\textsf{S}_0}$}\label{section2}
Let $\ve$ be a monoidal category. We say that an object $A$ of $\ve$
is an {\em algebra in} $\ve$, if there exist morphisms $\mu:A\otimes
A\en A$ and $\eta:\I\en A$ such that
 \begin{gather}
\mu(\mu\otimes \id_A)\doteq\mu(\id_A\otimes \mu)\,,\\
\mu(\eta\otimes \id_{A})\doteq \id_{A}\doteq \mu(\id_{A}\otimes
\eta).
\end{gather}
Dually, we say that $C$ is a \emph{coalgebra in} $\ve$, if there exist
morphisms $\Delta:C\en C\otimes C$ and $\varepsilon:C\en \I$ such
that
\begin{gather}
({\Delta}\otimes \id_{{C}}){\Delta}\doteq(\id_{{C}}\otimes
{\Delta}){\Delta}\,,\\
({\varepsilon}\otimes \id_{{C}}){\Delta}\doteq \id_{{C}}\doteq
(\id_{{C}}\otimes {\varepsilon}){\Delta}.
\end{gather}
If $H$ is an algebra, then the product in $H\otimes H$ is defined
by the following composite
\begin{equation}\label{producto en HtH}
\xymatrix@C=1.6cm{\widehat{\mu}:(H\otimes H)\otimes (H\otimes
H)\ar[r]^-{A^{-1}_{H\otimes H,H,H}}& ((H\otimes H)\otimes
H)\otimes H \ar[r]^-{A_{H,H,H}\otimes \id_{H}} & {}\\
{(H\otimes (H\otimes H))\otimes H}\ar[r]^-{\id_H\otimes
\ce_{H,H}\otimes \id_{H}}& (H\otimes (H\otimes H))\otimes H
\ar[r]^-{A^{-1}_{H,H,H}\otimes \id_H}&{}\\
((H\otimes H)\otimes H)\otimes H\ar[r]^-{A_{(H\otimes H), H,H}}&
(H\otimes H)\otimes (H\otimes H)\ar[r]^-{\mu\otimes
\mu}&{}\\
{}\ar[r]&H\otimes H\,.}
\end{equation}We say that $H$ is a {\em bialgebra in} $\ve$, if
$\widehat{\mu}(\Delta\otimes \Delta)\doteq \Delta \mu$ and
$\varepsilon\mu=\varepsilon\otimes \varepsilon$. \\
If $A$ is an algebra, an object $V$ is an $A$-module, if there
exists a morphism $T:A\otimes V\en V$, such that $T(\mu\otimes
\id_V)\doteq T(\id_A\otimes T)$ and $T(\eta\otimes \id_V)\doteq \id_V$.\\
 Note that if the category is
strict monoidal, the latter are the concepts of algebra, coalgebra,
bialgebra and module in strict braided monoidal categories.

We are going to find  bialgebras in $\rc^{\textsf{S}_0}$, when $\rc$
is a braided strict monoidal category with left duality.

Let $h:\s_h\en \Obj(\rc)$ be an injective function such that
$h(\s_h)\subset \Obj(\rc)$ is closed under $\otimes$, that is, for
any pair $(x,y)\in \s_h\times \s_h$, there exists a unique $z\in
\s_h$ such that $h(x)\otimes h(y)=h(z)$ and suppose $\I \in
h(\s_h)$. For example, we can take a set $\s_0$ with the same
cardinality as $\Obj(\rc)$ and $h:\s_0\en \Obj(\rc)$ to be any
bijection, if $\Obj(\rc)$ is an infinite set.

Set $\Delta_h=\{(x,x)\mid x\in \s_h\}\subset \s_h\times \s_h$ and
let $\h$ be the object defined by the composite
\begin{equation}
\xymatrix@1{\h:\gamma(\Delta_h)\ar[r]^-{\gamma^{-1}}&
\Delta_h\ar[r]^-{h^{\ast}\times h}& \Obj(\rc)\times
\Obj(\rc)\ar[r]^-{\otimes}&\Obj(\rc)}
\end{equation}
where $h^{\ast}(x):=(h(x))^{\ast}$. That is, $\h$ is defined by
the relation $\h(\gamma(x,x))=h^{\ast}(x)\otimes h(x)$, for
$\gamma(x,x)\in \s_{\h}=\gamma(\Delta_h)$.

 The main theorem in this section is the following.

\begin{thm}\label{principal} The object $\h$ is a bialgebra in
$\rcs$ and the objects of $\rc$, considered as a subcategory of
$\rcs$, are $\h$-modules.
\end{thm}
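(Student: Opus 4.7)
The plan is to define the structure morphisms $\mu$, $\eta$, $\Delta$, $\varepsilon$ on $\h$ and an action $T_V$ on each object $V\in\Obj(\rc)$ component-wise, using the hypothesis that $h(\s_h)$ is closed under $\otimes$ and contains $\I$, and then to verify each axiom up to the equivalence $\doteq$ by reducing it to a diagram in the strict braided category $\rc$ with duality.

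First I would fix the combinatorial data. For each $(x,y)\in\s_h\times\s_h$ let $x\star y\in\s_h$ be the unique index with $h(x\star y)=h(x)\otimes h(y)$, and let $x_0\in\s_h$ be the unique index with $h(x_0)=\I$. Then $\eta:\I\en\h$ has its only nonzero component at $\gamma(x_0,x_0)$, equal to the canonical identification $\I\cong\I^{\ast}\otimes\I$; the counit $\varepsilon:\h\en\I$ has component $\de_{h(x)}$ at $\gamma(x,x)$; the comultiplication $\Delta:\h\en\h\otimes\h$ is supported only on the diagonal, sending $\gamma(x,x)$ to $\gamma(\gamma(x,x),\gamma(x,x))$ by inserting a coevaluation $\be_{h(x)}$ in the middle of $h^{\ast}(x)\otimes h(x)$; and $\mu:\h\otimes\h\en\h$ has its only nonzero component at source $\gamma(\gamma(x,x),\gamma(y,y))$ and target $\gamma(x\star y,x\star y)$, given by the composite
\[
h^{\ast}(x)\otimes h(x)\otimes h^{\ast}(y)\otimes h(y)\en (h(x)\otimes h(y))^{\ast}\otimes(h(x)\otimes h(y))
\]
obtained by using $\ce$ to move $h^{\ast}(y)$ past $h(x)$ and then identifying $h^{\ast}(y)\otimes h^{\ast}(x)$ with $(h(x)\otimes h(y))^{\ast}$ via the standard braided duality isomorphism.

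With these definitions, unitality and counitality are immediate from the snake identities $(\de\otimes\id)(\id\otimes\be)=\id$ in $\rc$ together with the strict unitality $h(x_0)\otimes h(x)=h(x)=h(x)\otimes h(x_0)$. Associativity of $\mu$ follows from the strict associativity of $\otimes$ in $\rc$ combined with the hexagon axiom, which lets the braiding used in $\mu\circ(\mu\otimes\id)$ be rearranged into that used in $\mu\circ(\id\otimes\mu)$. Coassociativity of $\Delta$ follows from the naturality of the coevaluation. The only delicate axiom is the bialgebra compatibility $\widehat{\mu}(\Delta\otimes\Delta)\doteq\Delta\mu$, which I would verify graphically in the style of \cite{Ka95,Tu94}: after absorbing the many associators and units in the definition of $\widehat\mu$ into elements of $\mathcal{A}$, both sides reduce to the same tangle diagram consisting of two coevaluation/evaluation loops intertwined by a braiding, and the resulting identity is a standard consequence of the braiding-duality compatibility. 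The relation $\varepsilon\mu=\varepsilon\otimes\varepsilon$ is the corresponding statement that the evaluation on $h(x)\otimes h(y)$ decomposes as the two individual evaluations, by a snake identity.

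For the module structure on $V\in\Obj(\rc)$ viewed in $\rcs$ via the functor $J$ of Proposition \ref{funtor inclusion}, I would define $T_V:\h\otimes J(V)\en J(V)$ component-wise at $\gamma(\gamma(x,x),x_V)$ by the composite
\[
h^{\ast}(x)\otimes h(x)\otimes V\stackrel{\ce_{h^{\ast}(x)\otimes h(x),V}}{\longrightarrow} V\otimes h^{\ast}(x)\otimes h(x)\stackrel{\id_V\otimes\de_{h(x)}}{\longrightarrow}V.
\]
The axiom $T_V(\eta\otimes\id_V)\doteq\id_V$ is immediate from the duality axiom and the fact that the braiding with $\I$ is trivial. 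The associativity of the action $T_V(\mu\otimes\id_V)\doteq T_V(\id_\h\otimes T_V)$ expands to a diagram whose commutativity reduces, again graphically, to the naturality of $\ce$, the hexagon axiom, and the snake identities. The main obstacle throughout is not any single identity but the bookkeeping: each axiom, written out component-wise through the bijection $\gamma$ and the constraints $A,R,L$, produces long expressions, and the point of the relation $\doteq$ introduced in the preceding remark is precisely to absorb this bookkeeping into $\mathcal A$ so that the underlying identities in $\rc$ become visible.
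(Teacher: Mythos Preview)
Your outline follows the same strategy as the paper --- define $\mu,\eta,\Delta,\varepsilon$ and $T$ componentwise and reduce every axiom to a graphical identity in $\rc$ --- but two points deserve correction.

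First, your formula for $\mu$ does not typecheck as written: after braiding $h^\ast(y)$ past $h(x)$ you obtain $h^\ast(x)\otimes h^\ast(y)\otimes h(x)\otimes h(y)$, not $h^\ast(y)\otimes h^\ast(x)\otimes\cdots$, so the ``standard'' duality isomorphism $\gamma_{h(x),h(y)}$ is not directly applicable. The paper's multiplication is the map $\Gamma_{h(y_v),h(x_v)}$ of Lemma~\ref{lema2}, which uses \emph{two} braidings (one in the middle, one on the non-dual side via $\ce_{h(x_v),h(y_v)}$) and therefore lands in the \emph{reversed} index $\chi(y_v,x_v)$, i.e.\ $h(y_v)\otimes h(x_v)$, not $h(x_v)\otimes h(y_v)$. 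This choice is not cosmetic: the associativity of $\mu$ is exactly the coherence identity $\Gamma_{x,y\otimes z}(\Gamma_{y,z}\otimes\id)=\Gamma_{x\otimes y,z}(\id\otimes\Gamma_{x,y})$ isolated in Lemma~\ref{lema2}, which is the one nontrivial graphical computation in the proof; ``the hexagon axiom'' alone does not suffice without specifying the second braiding.

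Second, the paper's $\h$-action on $J(V)$ is simply $\de_{h(x)}\otimes\id_V$, that is, the trivial action through the counit $\varepsilon$. Your braided version $(\id_V\otimes\de_{h(x)})\circ\ce_{h^\ast(x)\otimes h(x),V}$ coincides with this by naturality of $\ce$ (since $\ce_{\I,V}$ is, up to unit constraints, the identity), so it is correct but unnecessarily elaborate: the module axioms for the trivial action follow immediately from the algebra axioms and require no further graphical argument.
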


 To prove it, we shall establish two previous lemmas.  Let $\chi:\s_h\times \s_h\en
 \s_h$ be the function defined by the relation $h(\chi(x,y))=h(x)\otimes
 h(y)$.

  \begin{lem}\label{lema1}
  The function $\chi$ satisfies $\chi(\chi(x,y),z)=\chi(x,\chi(y,z))$.
  \end{lem}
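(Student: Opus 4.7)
The plan is to apply the defining relation of $\chi$ repeatedly, reduce the identity to associativity of $\otimes$ in $\rc$ (which is strict monoidal by hypothesis), and then conclude using injectivity of $h$.

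More precisely, I would first compute $h(\chi(\chi(x,y),z))$ by two applications of the defining equation $h(\chi(a,b)) = h(a)\otimes h(b)$: the outer application gives $h(\chi(x,y))\otimes h(z)$, and then the inner one gives $(h(x)\otimes h(y))\otimes h(z)$. Symmetrically, $h(\chi(x,\chi(y,z)))$ unfolds to $h(x)\otimes (h(y)\otimes h(z))$. Since $\rc$ is strict monoidal, these two expressions are literally equal as objects of $\rc$, so $h(\chi(\chi(x,y),z)) = h(\chi(x,\chi(y,z)))$.

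The hypothesis that $h:\s_h\to \Obj(\rc)$ is injective then forces $\chi(\chi(x,y),z)=\chi(x,\chi(y,z))$, which is exactly the claim. Note that the fact that $\chi$ is well-defined (i.e., that the element $\chi(a,b)\in\s_h$ with $h(\chi(a,b))=h(a)\otimes h(b)$ actually exists and is unique) is guaranteed by the two standing hypotheses on $h$: closure of $h(\s_h)$ under $\otimes$ gives existence, and injectivity gives uniqueness.

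There is no real obstacle here; the lemma is purely a set-theoretic bookkeeping statement, and the only ingredients used are strict associativity of $\otimes$ in $\rc$ and injectivity of $h$. The only thing to be careful about is not confusing the strict associativity in $\rc$ (which holds on the nose and is what drives the argument) with the non-strict associativity of the extended tensor product in $\rcs$ (which plays no role in this lemma).
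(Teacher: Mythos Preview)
Your proposal is correct and follows exactly the same approach as the paper: unfold $h(\chi(\chi(x,y),z))$ and $h(\chi(x,\chi(y,z)))$ via the defining relation of $\chi$, use strict associativity of $\otimes$ in $\rc$ to identify them, and conclude by injectivity of $h$. The paper's proof is just the one-line chain of equalities you describe, without the additional (but accurate) remarks on well-definedness and on the distinction between strictness in $\rc$ versus $\rcs$.
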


\begin{proof} $$h(\chi(\chi(x,y),z))=h(\chi(x,y))\otimes h(z)=h(x)\otimes
h(y)\otimes h(z)=$$
$$=h(x)\otimes h(\chi(y,z))=h(\chi(x,\chi(y,z)))\,.$$
Thus $\chi(\chi(x,y),z)=\chi(x,\chi(y,z))$.
\end{proof}

We now consider, in a more general situation, a monoidal category
with left duality \ve. In the following lemma we use letters
$x,y,z,...$ to denote objects of \ve.
  Let $x,y$ be objects of $\ve$. Recall that there exists an isomorphism
  $\gamma_{x,y}:y^\ast\otimes x^\ast\en
(x\otimes y)^\ast$ given by
\begin{equation}\label{iso.tens.dual.1}
\gamma_{x,y}=(\de_y\otimes \id_{(x\otimes y)^\ast})(\id_{y^\ast}
\otimes \de_x\otimes \id_{y\otimes(x\otimes
y)^\ast})(\id_{y^\ast\otimes x^\ast}\otimes
 \be_{x\otimes y})\,.
 \end{equation}
 Now define the isomorphism $\Gamma_{x,y}:y^\ast\otimes
y\otimes x^\ast \otimes x\en (x\otimes y)^\ast\otimes (x\otimes
y)$ by the composite
$$ \xymatrix@C=0.5cm{ \Gamma_{x,y}: y^\ast\otimes y\otimes x^\ast
\otimes x \ar[rrr]^-{\id_{y^\ast}\otimes \ce_{y,x^\ast}\otimes
\id_x}&&& y^\ast\otimes x^\ast\otimes y\otimes x
\ar[rr]^-{\gamma_{x,y}\otimes \ce_{y,x}}&& (x\otimes
y)^\ast\otimes (x\otimes y)\,.}$$

\begin{lem}  \label{lema2}The isomorphisms $\Gamma_{x,y}$ satisfy the relation $$\Gamma_{x,y\otimes z}
(\Gamma_{y,z}\otimes \id_{x^\ast\otimes x})=\Gamma_{x\otimes y,z}
(\id_{z^\ast\otimes z}\otimes \Gamma_{x,y}).$$
That is, if $x,y$ and $z$ are objects of \ve\, then the following
diagram commutes
$$ \xymatrix{ z^\ast\otimes z\otimes y^\ast\otimes y\otimes
x^\ast\otimes x \ar[rr]^-{\Gamma_{y,z}\otimes \id_{x^\ast\otimes
x}} \ar[d]_{\id_{z^\ast\otimes z}\otimes \Gamma_{x,y}} & &
(y\otimes x)^\ast\otimes (y\otimes x)\otimes x^\ast\otimes x
\ar[d]^{\Gamma_{x,y\otimes z}} \\
 z^\ast\otimes z\otimes
(x\otimes y)^\ast\otimes (x\otimes y) \ar[rr]^{\Gamma_{x\otimes
y,z}} & & (x\otimes y\otimes
 z)^\ast\otimes (x\otimes y\otimes z)\,. }
$$
\end{lem}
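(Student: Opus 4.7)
The plan is to reduce to strict monoidal via Mac Lane coherence and then verify the identity using graphical calculus in the style of \cite{Ka95,Tu94}. Since $\ve$ is monoidal with a braiding and a left duality, I may replace $\ve$ by an equivalent strict braided monoidal category, so that I do not have to carry associators in the computation; this is legitimate because every term occurring in the diagram is a composite of structural isomorphisms, $\gamma_{-,-}$, $\ce_{-,-}$, $\be_{-}$ and $\de_{-}$, all of which are transported along a tensor equivalence. After this reduction the target of both legs is simply $(x\otimes y\otimes z)^\ast\otimes (x\otimes y\otimes z)$.

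Next I would expand both legs using the definitions of $\Gamma_{-,-}$ and $\gamma_{-,-}$ in \eqref{iso.tens.dual.1}, drawing each composite as a string diagram: each $\Gamma$ contributes one ``$\gamma$--evaluation gadget'' (a cup made from $\be$ and $\de$) on the left half and one braiding $\ce_{-,-}$ on the right half, connected by an intermediate braiding $\id\otimes\ce\otimes\id$. The key point is that on the right-hand factors of both legs one ends up with the braiding $\ce_{y\otimes z, x}\circ(\ce_{y,x}\otimes \id)\doteq$ the other composite $\ce_{z, x\otimes y}\circ(\id\otimes \ce_{y,x})$ used on the other side; this is exactly the content of the two hexagon axioms applied to $\ce$, together with naturality of $\ce$.

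The crux is the left half, where I must show that the two different ways of iterating $\gamma$ agree, namely the coherence identity
\begin{equation*}
\gamma_{x\otimes y,\,z}\circ(\id_{z^\ast}\otimes\gamma_{x,y})
=\gamma_{x,\,y\otimes z}\circ(\gamma_{y,z}\otimes\id_{x^\ast}).
\end{equation*}
This is a standard consequence of the rigidity axioms: expanding both sides by \eqref{iso.tens.dual.1} and sliding the evaluations $\de_x,\de_y,\de_z$ past each other using the zig-zag identities and naturality turns both sides into the same canonical isomorphism $z^\ast\otimes y^\ast\otimes x^\ast\to (x\otimes y\otimes z)^\ast$. This is the step I expect to be the main obstacle, since it requires carefully manipulating three nested evaluation/coevaluation pairs; graphically, however, the manipulation is transparent, amounting to straightening the cups that define $\gamma_{x\otimes y, z}$ and $\gamma_{x, y\otimes z}$ into a single triple cup.

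Once this coherence identity and the hexagons are in place, the two legs of the diagram coincide termwise after a final application of naturality of $\ce$ (to commute the braiding $\ce_{y,x^\ast}$ used inside $\Gamma_{x,y}$ past the $\gamma$--gadget produced by $\Gamma_{y,z}$), and the lemma follows. Undoing the coherence reduction yields the statement in the original nonstrict $\ve$.
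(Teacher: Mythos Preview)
Your proposal is correct and is essentially the paper's own proof: the paper simply says ``we prove it by using graphical calculus'' and refers to Figure~\ref{fig4.2}, so the underlying argument is the same string-diagram manipulation you outline, using the hexagon axioms, naturality of $\ce$, and the zig-zag identities. Your write-up is in fact more explicit than the paper's, since you isolate the two sub-identities (the coherence $\gamma_{x\otimes y,z}(\id\otimes\gamma_{x,y})=\gamma_{x,y\otimes z}(\gamma_{y,z}\otimes\id)$ on the dual side, and the braid/Yang--Baxter relation on the undualised side) that the figure encodes.

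One small correction: in your description of the ``right-hand factors'', the first composite should read $\ce_{y\otimes z,x}\circ(\ce_{z,y}\otimes\id_x)$ rather than $\ce_{y\otimes z,x}\circ(\ce_{y,x}\otimes\id)$, since the inner $\Gamma$ on that leg is $\Gamma_{y,z}$; the comparison you need on that side is then precisely the Yang--Baxter relation $\sigma_1\sigma_2\sigma_1=\sigma_2\sigma_1\sigma_2$, which, as you say, follows from the hexagons together with naturality of $\ce$.
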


 \begin{proof} We prove it by using graphical calculus. In Figure \ref{fig4.1} the morphism
 $\Gamma_{x,y}$ is represented. Figure \ref{fig4.2} (at the end of the
 paper) proves the
 Lemma. The upper left and bottom right diagrams represent
 the morphisms $\Gamma_{x,y\otimes
z} (\Gamma_{y,z}\otimes \id_{x^\ast\otimes x})$ and
$\Gamma_{x\otimes y,z}
(\id_{z^\ast\otimes z}\otimes \Gamma_{x,y})$,  respectively.

\begin{figure}[ht]
\begin{center}
\includegraphics[width=9.5cm] {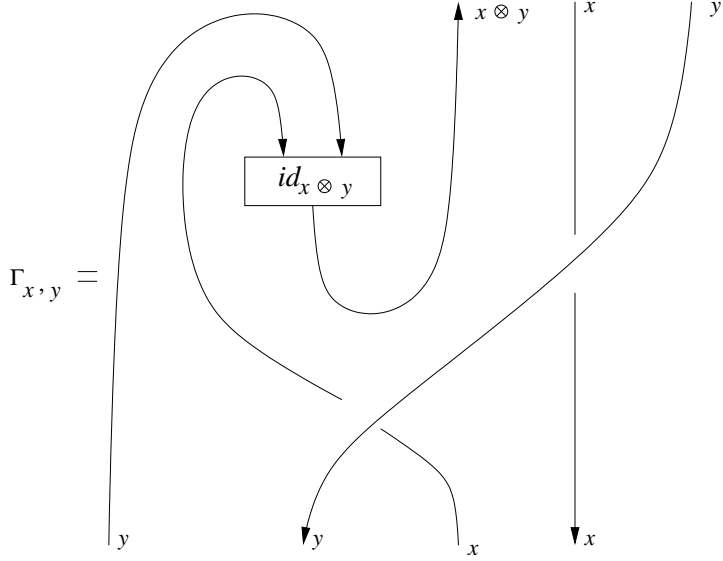}
\caption{The morphism $\Gamma_{x,y}$ } \label{fig4.1}
\end{center}
\end{figure}

\begin{figure}[ht]
\begin{center}
\includegraphics[width=16.5cm] {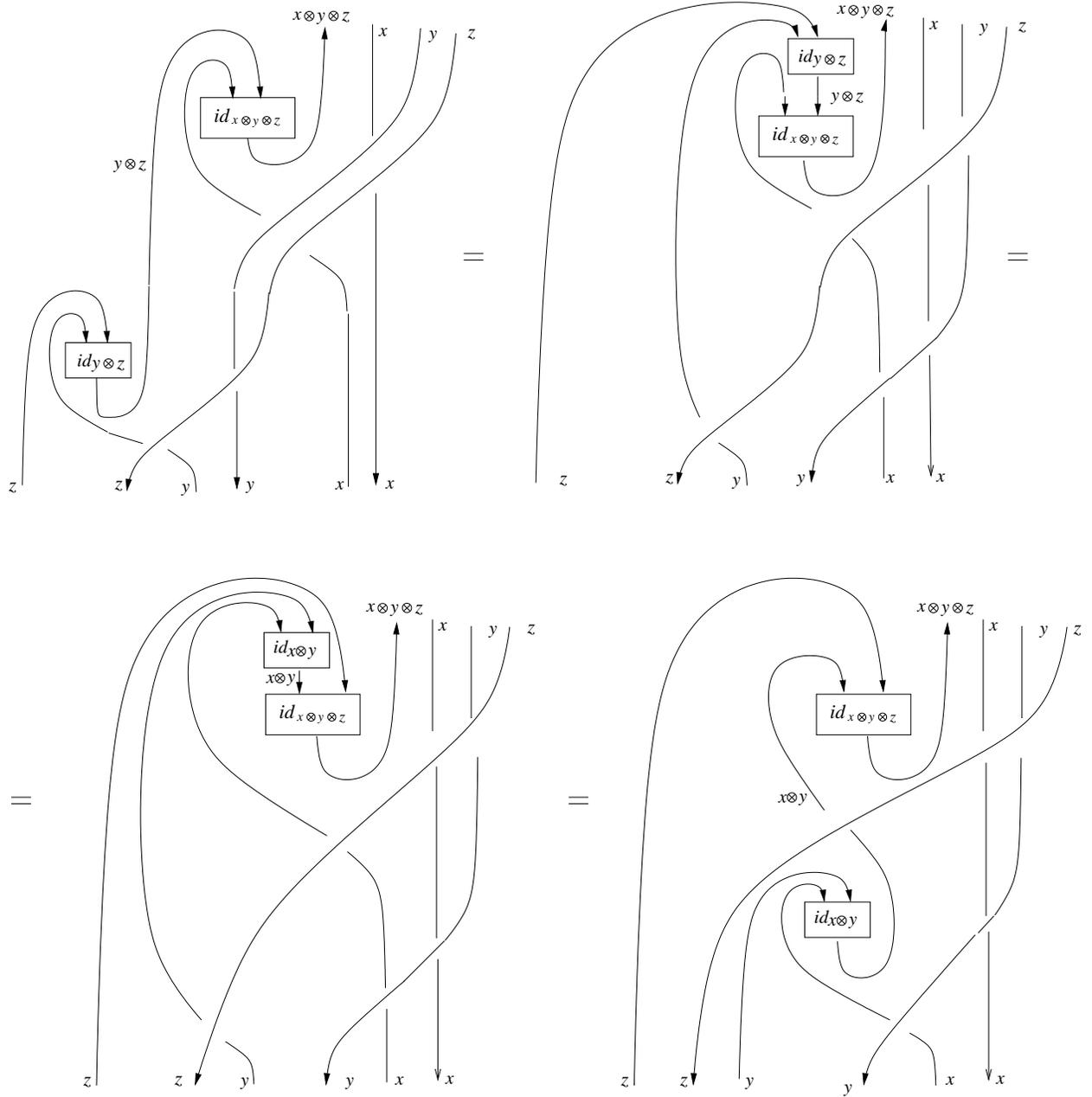}
\caption{$\Gamma_{x,y\otimes z} (\Gamma_{y,z}\otimes
\id_{x^\ast\otimes x})=\Gamma_{x\otimes y,z} (\id_{z^\ast\otimes
z}\otimes \Gamma_{x,y})$  } \label{fig4.2}
\end{center}
\end{figure}
\end{proof}

\noindent
 {\em Proof} of Theorem \ref{principal}. Define $\mu:\h\otimes \h\en
 \h$ by
 \begin{equation}
\begin{matrix}
 \mu(v,\gamma(z,z)):(\h\otimes \h)(v)=h^{\ast}(x_v)\otimes h(x_v)\otimes h^{\ast}(y_v)\otimes
 h(y_v)  \\
 \xymatrix{
{}\ar[rrr]^-{\delta_{z,\chi(y_v,x_v)\Gamma_{h(y_v),h(x_v)}}} & & &
 \h(\gamma(z,z))=h^{\ast}(z)\otimes h(z)}
\end{matrix}
\end{equation}
Since there is a unique $x_0\in \s_h$ such that $h(x_0)=\I\in
\Obj(\rc)$, we can define $\eta:\I\en \h$ by
$$\eta(\ast,\gamma(y,y))=\delta_{x_0,y}\id_{\I}:\I=h^{\ast}(x_0)\otimes
h(x_0)\en h^{\ast}(y)\otimes h(y)\,.$$
We have to prove now that
$\mu(\mu\otimes \Id_{\h})\doteq\mu(\Id_{\h}\otimes \mu)$ and
$\mu(\eta\otimes \Id_{\h})=\Id_{\h}= \mu(\Id_{\h}\otimes \eta)$.\\
Set $S=\mu(\mu\otimes \Id_{\h})(w,\gamma(t,t))$ and
$R=\mu(\Id_{\h}\otimes \mu)(w',\gamma(t,t))$. We have on the one hand
\begin{equation}
\begin{split}
S&=\sum_{v\in \s_{\h\otimes
\h}}\mu(v,\gamma(t,t))\circ (\mu\otimes \Id_{\h})(w,v)\\
&=\sum_{v\in \s_{\h\otimes
\h}}\delta_{t,\chi(y_v,x_v)}\Gamma_{h(y_v),h(x_v)}\circ
(\delta_{x_v,\chi(y_w,x_w)}\Gamma_{h(y_w),h(x_w)}\otimes
\delta_{z_w,y_v}\id_{h^{\ast}(z_w)\otimes h(z_w)})\\
 &=\delta_{t,\chi(z_w,\chi(y_w,x_w))}\Gamma_{h(z_w),h(\chi(y_w,x_w))}\circ
(\Gamma_{h(y_w),h(x_w)}\otimes \id_{h^{\ast}(z_w)\otimes
 h(z_w)})\\
 &=\delta_{t,\chi(z_w,\chi(y_w,x_w))}\Gamma_{h(z_w),h(y_w)\otimes
  h(x_w)}\circ (\Gamma_{h(y_w),h(x_w)}\otimes
\id_{h^{\ast}(z_w)\otimes
 h(z_w)})
\end{split}
\end{equation}
 On the other hand we have
\begin{equation}
\begin{split}
R=&\sum_{v\in \s_{\h\otimes \h}}\mu(v,\gamma(t,t))\circ
(\Id_{\h}\otimes \mu)(w',v)\\
&=\sum_{v\in \s_{\h\otimes
\h}}\delta_{t,\chi(y_v,x_v)}\Gamma_{h(y_v),h(x_v)}\circ
(\delta_{x_{w'},x_v}\id_{h^{\ast}(x_{w'})\otimes h(x_{w'})}\otimes
\delta_{y_v,\chi(z_{w'},y_{w'})}\Gamma_{h(z_{w'}),h(y_{w'})})\\
&=\delta_{t,\chi(\chi(z_{w'},y_{w'}),x_{w'})}\Gamma_{h(\chi(z_{w'},y_{w'})),h(x_v)}\circ
(\id_{h^{\ast}(x_{w'})\otimes h(x_{w'})}\otimes \Gamma_{h(z_{w'}),h(y_{w'})}))\\
&=\delta_{t,\chi(\chi(z_{w'},y_{w'}),x_{w'})}\Gamma_{h(z_{w'})\otimes
h(y_{w'}),h(x_v)}\circ (\id_{h^{\ast}(x_{w'})\otimes
h(x_{w'})}\otimes \Gamma_{h(z_{w'}),h(y_{w'})}))
\end{split}
\end{equation}
According to Lemma \ref{lema1}, we have
$\chi(\chi(z_{w'},y_{w'}),x_{w'})=\chi(z_{w'},\chi(y_{w'},x_{w'}))$.
From this and Lemma \ref{lema2}, it is easy to see that
$R\circ A_{\h,\h,\h}=S$ so $R\doteq S$.

We shall prove now that $\mu(\eta\otimes \Id_{\h})\doteq \Id_{\h}$.
Set $J=\mu(\eta\otimes \Id_{\h})(u,\gamma(z,z))$. From
$h(\chi(x_u,x_0))=h(x_u)\otimes h(x_0)=h(x_u)\otimes \I=h(x_u)$ we
deduce that $\chi(x_u,x_0)=x_u$ and since
$\Gamma_{a,\I}=\id_{h^{\ast}(a)\otimes h(a)}$ for any object $a$ of
$\rc$, we have
\begin{equation}
\begin{split}
J&=\sum_{v\in \s_{\h\otimes
\h}}\mu(v,\gamma(z,z))\circ (\eta\otimes \Id_{\h})(u,v)\\
&=\sum_{v\in \s_{\h\otimes
\h}}\delta_{z,\chi(y_v,x_v)}\Gamma_{h(y_v),h(x_v)}\circ
(\eta(\ast,\gamma(x_v,x_v))\otimes
\Id_{\h}(\gamma(x_u,x_u),\gamma(y_v,y_v)))\\
&=\sum_{v\in \s_{\h\otimes
\h}}\delta_{z,\chi(y_v,x_v)}\Gamma_{h(y_v),h(x_v)}\circ
(\delta_{x_0,x_v}\id_{\I}\otimes
\delta_{x_u,y_v}\id_{h^{\ast}(x_u)\otimes h(x_u)})\\
&=\delta_{z,\chi(x_u,x_0)}\Gamma_{h(x_u),h(x_0)}\\
&=\delta_{z,x_u}\Gamma_{h(x_u),\I}\\
&=\delta_{z,x_u}\id_{h^{\ast}(x_u)\otimes h(x_u)}\\
&=\Id_{\h}(u,\gamma(z,z))
\end{split}
\end{equation}
The relation $\mu(\Id_{\h}\otimes \eta)\doteq \Id_{\h}$ is proved in
a similar way.

We have thus shown that $(\h,\mu,\eta)$ is an algebra in
 $\rcs$.
 Define now $\Delta:\h\en \h\otimes \h$ to be the function
 $$\Delta(\gamma(x,x),v):h^{\ast}(x)\otimes h(x)\en
 h^{\ast}(y_v)\otimes h(y_v)\otimes h^{\ast}(z_v)\otimes h(z_v)$$
 given by the following composite
\begin{equation*}
\begin{matrix}
\xymatrix@1{{\delta_{x,y_v}\delta_{x,z_v}\id_{h^{\ast}(x)}\otimes
\be_{h(x)}\otimes \id_{h(x)}}:h^{\ast}(x)\otimes h(x)\ar[r] & {}} \\
\xymatrix@1{{}\ar[r] & h^{\ast}(y_v)\otimes h(y_v)\otimes
  h^{\ast}(z_v)\otimes h(z_v)}
\end{matrix}
\end{equation*}
and define $\varepsilon:\h\en \I$ as the function given by
$$\varepsilon(\gamma(x,x),\ast)=\de_{h(x)}:h^{\ast}(x)\otimes
h(x)\en \I\,.$$
We are going to prove that $(\Id_{\h}\otimes
\Delta)\Delta\doteq (\Delta\otimes \Id_{\h}\Delta)$ and
$(\varepsilon\otimes \Id_{\h})\Delta=\Id_{\h}=(\Id_{\h}\otimes
\varepsilon)$. Set $L=(\Id_{\h}\otimes
\Delta)\Delta(\gamma(t,t),w)$. Then
\begin{equation}
\begin{split}
L&=\sum_{v\in \s_{\h\otimes \h}}(\Id_{\h}\otimes
\Delta)(v,w')\circ
\Delta(\gamma(t,t),v)\\
 &=\sum_{v\in \s_{\h\otimes
\h}}(\delta_{x_v,x_w}\id_{h^{\ast}(x_v)\otimes h(x_v)}\otimes
\delta_{y_v,y_{w'}}\delta_{y_v,z_{w'}}\id_{h^{\ast}(y_v)}\otimes
\be_{h(y_v)}\otimes \id_{h(y_v)})\circ\\
&\quad ({\delta_{t,x_v}\delta_{t,y_v}\id_{h^{\ast}(t)}\otimes
\be_{h(t)}\otimes \id_{h(t)}})\\
&=\delta_{t,x_v}\delta_{t,y_v}\delta_{t,z_v}(\id_{h^{\ast}(t)\otimes
h(t)}\otimes \id_{h^{\ast}(t)}\otimes \be_{h(t)}\otimes
\id_{h(t)})\circ (\id_{h^{\ast}(t)}\otimes \be_{h(t)}\otimes
\id_{h(t)})
\end{split}
\end{equation}
Set $R=(\Delta\otimes \Id_{\h}\Delta)(\gamma(t,t),w)$. Then
\begin{equation}
\begin{split}
R&=\sum_{v\in \s_{\h}\otimes \s_{\h}}(\Delta\otimes
\Id_{\h})(v,w)\circ \Delta(\gamma(t,t),v)\\
&=\sum_{v\in \s_{\h}\otimes
\s_{\h}}\delta_{x_v,x_w}\delta_{x_v,y_w}(\id_{h^{\ast}(x_v)}\otimes
\be_{h(x_v)}\otimes \id_{h(x_v)}\otimes
\delta_{y_v,z_w}\id_{h^{\ast}(y_v)\otimes h(y_v)})\circ\\
&\quad ({\delta_{t,x_v}\delta_{t,y_v}\id_{h^{\ast}(t)}\otimes
\be_{h(t)}\otimes \id_{h(t)}})\\
&=\delta_{t,x_v}\delta_{t,y_v}\delta_{t,z_v}(\id_{h^{\ast}(t)}\otimes
\be_{h(t)}\otimes \id_{h(t)}\otimes \id_{h^{\ast}(t)\otimes
h(t)})\circ (\id_{h^{\ast}(t)}\otimes \be_{h(t)}\otimes
\id_{h(t)})
\end{split}
\end{equation}
Taking $x=h(t)$, Figure \ref{fig4.3} (at the end of the paper) shows that $R$ and $L$
are equal up to associativity, that is
$A_{\h,\h,\h}(w,w')\circ R=L$. Thus  $L\doteq R$.
\begin{figure}[ht]
\begin{center}
\includegraphics[width=15.5cm] {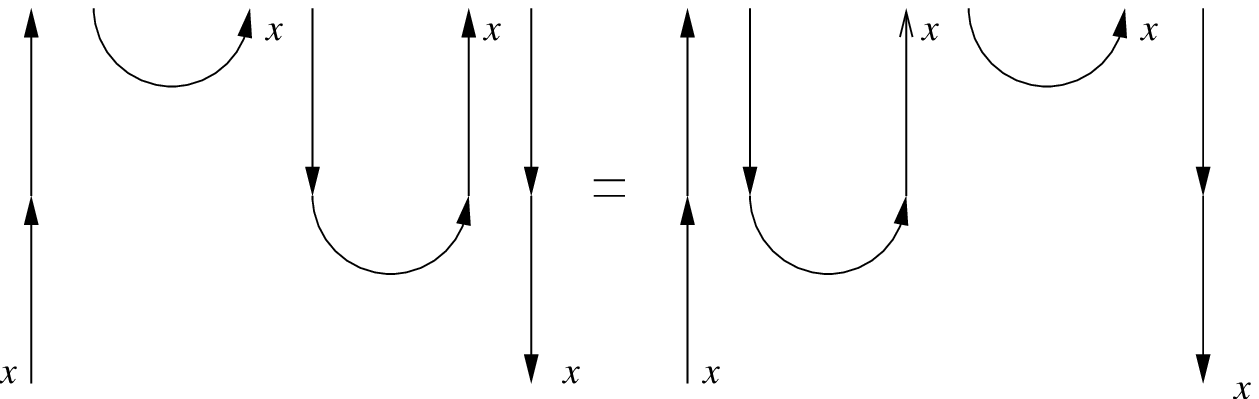}
\caption{$(\id_{h_i^\ast}\otimes \be_{h_i}\otimes \id_{h_i}\otimes
\id_{h_i^\ast\otimes h_i})(\id_{h_i^\ast}\otimes \be_{h_i}\otimes
\id_{h_i})=(\id_{h_i^\ast\otimes h_i}\otimes \id_{h_i^\ast}\otimes
\be_{h_i}\otimes \id_{h_i})(\id_{h_i^\ast}\otimes \be_{h_i}\otimes
\id_{h_i})$}  \label{fig4.3}
\end{center}
\end{figure}

Next we prove $(\varepsilon\otimes
\Id_{\h})\Delta=\Id_{\h}$. Set $J=(\varepsilon\otimes
\Id_{\h})\Delta(\gamma(x,x),\gamma(y,y))$. Then
\begin{equation}
\begin{split}
J&=\sum_{x\in \s_{\h}\otimes \s_{\h}}(\varepsilon\otimes
(\Id_{\h}))(v,\gamma(y,y))\circ \Delta(\gamma(x,x),v)\\
&=\sum_{x\in \s_{\h}\otimes
\s_{\h}}(\varepsilon(\gamma(x_v,x_v),\ast)\otimes
\Id_{\h}(\gamma(y_v,y_v),\gamma(y,y)))\circ\\
&\quad (\delta_{x,x_v}\delta_{x,y_v}\id_{h^{\ast}(x)}\otimes
\be_{h(x)}\otimes \id_{h(x)})\\
&=(\de_{h(x_v)}\otimes \delta_{y_v,y}\id_{h^{\ast}(y_v)\otimes
h(y_v)})\circ
(\delta_{x,x_v}\delta_{x,y_v}\id_{h^{\ast}(x)}\otimes
\be_{h(x)}\otimes \id_{h(x)})\\
&=\delta_{x,y}(\de_{h(x)}\otimes \id_{h^{\ast}(x)\otimes
h(x)})\circ (\id_{h^{\ast}(x)}\otimes \be_{h(x)}\otimes
\id_{h(x)})
\end{split}
\end{equation}
From the definition of left duality we get $(\de_{h(x)}\otimes
\id_{h^{\ast}(x)})(\id_{h^{\ast}(x)}\otimes
\be_{h(x)})=\id_{h^{\ast}(x)}$, so
$J=\delta_{x,y}\id_{h^{\ast}(x)\otimes
h(x)}=\Id_{\h}(\gamma(x,x),\gamma(y,y))$.

The relation $\Id_{\h}=(\Id_{\h}\otimes \varepsilon)$ is proved in
a similar way, and with this we have showed
$(\h,\Delta,\varepsilon)$
is a coalgebra in $\rcs$.

It is enough to prove now that $\Delta$ and $\varepsilon$ are
algebra morphisms. For ${\Delta}$ we have to show that the diagram
$$\xymatrix{
{\h}\otimes {\h} \ar[r]^-{{\Delta}\otimes {\Delta}} \ar[d]_{{\mu}}
& ({\h}\otimes {\h})\otimes ({\h}\otimes
{\h}) \ar[d]^{{\hat{\mu}}} \\
{\h} \ar[r]^{{\Delta}} & {\h}\otimes {\h}}$$ commutes up to the
relation $\doteq$, where ${\hat{\mu}}$ is the product in
${\h}\otimes {\h}$ and it is defined, as in
 \eqref{producto en HtH}, by the composite
$$ \xymatrix @C=1.3cm{ \hat{\mu}: (\h\otimes \h)\otimes (\h\otimes
\h)\ar[r]^-{A^{-1}_{\h\otimes \h,\h,\h}}& ((\h\otimes \h)\otimes
\h)\otimes \h\ar[d]|{(\Id_{\h}\otimes
\Ce_{\h,\h}\otimes \Id_{\h})A^{-1}_{\h,\h,\h}}\\
 & (\h\otimes (\h\otimes \h))\otimes \h \ar[d]|{(\mu\otimes
  \mu)A_{\h\otimes \h,\h,\h}(A^{-1}_{\h,\h,\h}\otimes \id_{\h})}\\
& \,\,\h\otimes \h\,. }$$ The morphism $\Id_{\h}\otimes
\Ce_{\h,\h}\otimes \Id_{\h}(v,w):(\h\otimes (\h\otimes \h)\otimes
\h)(v)\en (\h\otimes (\h\otimes \h)\otimes \h)(w)$ is related to
$F_w(v,w):(\h\otimes (\h\otimes \h)\otimes \h)(v)\en (\h\otimes
\h)\otimes (\h\otimes \h)(w)$, which is represented by the following
vertical arrow
 $$\xymatrix@!C=1.7cm { h(x_v)^\ast\otimes
h(x_v)\otimes h(y_v)^\ast\otimes h(y_v)\otimes h(z_v)^\ast\otimes
h(z_v)\otimes h(t_v)^\ast\otimes
h(t_v)\ar[d]|{F_w(v,w)=\delta_{x}^{v,w}\id_{ h(x_v)^\ast\otimes
h(x_v)}\otimes \delta_{y;z}^{v,w}\ce_{ h(y_v)^\ast\otimes h(y_v),
h(z_v)^\ast\otimes h(z_v) }\otimes \delta_{t}^{v,w}\id_{
h(t_v)^\ast\otimes h(t_v)}}\\
h(x_w)^\ast\otimes h(x_w)\otimes  h(z_w)^\ast\otimes h(z_w)\otimes
h(y_w)^\ast\otimes h(y_w)\otimes h(t_w)^\ast\otimes h(t_w)}$$
since their codomains are related by associativity.

The morphism $(\mu\otimes \mu)(w,u):(\h\otimes \h)\otimes
(\h\otimes \h)\en \h\otimes \h$, is represented by the following
vertical arrow
$$\xymatrix@!C=1.0cm{h(x_w)^\ast\otimes h(x_w)\otimes
h(z_w)^\ast\otimes h(z_w)\otimes h(y_w)^\ast\otimes h(y_w)\otimes
h(t_w)^\ast\otimes h(t_w)
\ar[d]|{\delta_{x_u,\chi(z_w,x_w)}\Gamma_{h(z_w),h(x_w)}\otimes
\delta_{y_u,\chi(t_w,y_w)}\Gamma_{h(t_w),h(y_w)}} \\
 h(x_u)^\ast\otimes h(x_u)\otimes h(y_u)^\ast\otimes h(y_u)}$$
 It is not difficult to see that $\hat{\mu}\doteq \sum_w(\mu\otimes \mu)\circ F_w$
 and that this last morphism turns out to be equal to
 $$\xymatrix@!C=1.5cm{
  h(x_v)^\ast\otimes h(x_v)\otimes h(y_v)^\ast\otimes
h(y_v)\otimes h(z_v)^\ast\otimes h(z_v)\otimes  h(t_v)^\ast\otimes
h(t_v) \ar[d]|{ \delta_{x_u,\chi(z_v,x_v)} \delta_{y_u,\chi(
t_v,y_v)}(\Gamma_{h(z_v),h(x_v)}\otimes
\Gamma_{h(t_v),h(y_v)})(\id_{ h(x_v)^\ast\otimes h(x_v)}\otimes
\ce_{ h(y_v)^\ast\otimes h(y_v), h(z_v)^\ast\otimes h(z_v)}\otimes
\id_{h(t_v)^\ast\otimes h(t_v)})}\\
  h(x_u)^\ast\otimes h(x_u)\otimes h(y_u)^\ast\otimes h(y_u)} $$
  Hence $\hat{\mu}(\Delta\otimes \Delta)\doteq \sum_vG_v\circ (\Delta\otimes \Delta)$,
  where $G_v$ is the last vertical arrow. But
  $(\Delta\otimes \Delta)(p,v):(\h\otimes \h)(p)\en ((\h\otimes \h)\otimes (\h\otimes
  \h))(v)$ is given by$$
(\Delta\otimes
\Delta)(p,v)=\delta_{x_p,x_v}\delta_{x_p,y_v}\delta_{y_p,z_v}\delta_{y_p,t_v}
  (\id_{h(x_p)^\ast}\otimes \be_{h(x_p)}\otimes
  \id_{h(x_p)})(\id_{h(y_p)^\ast}\otimes \be_{h(y_p)}\otimes
  \id_{h(y_p)})$$
so the sum yields
\begin{equation*}
\begin{split}
M&=\delta_{x_u,\chi(y_p,x_p)}\delta_{y_u,\chi(y_p,x_p)}(\Gamma_{y_p,x_p}\otimes
\Gamma_{y_p,x_p})(\id_{h(x_p)^\ast\otimes h(x_p)}\otimes
\ce_{h(x_p)^\ast\otimes h(x_p),h(y_p)^\ast\otimes h(y_p)}\\
&\quad \otimes \id_{h(y_p)^\ast\otimes
h(y_p)})(\id_{h(x_p)^\ast}\otimes \be_{h(x_p)}\otimes
\id_{h(x_p)})(\id_{h(y_p)^\ast}\otimes \be_{h(y_p)}\otimes
\id_{h(y_p)})
\end{split}
\end{equation*}
On the other hand, $(\mu \Delta)(p,u)$ is the sum over $v$ of the
following composite
$$\xymatrix{
 h(x_p)^\ast\otimes h(x_p)\otimes
h(y_p)^\ast\otimes h(y_p)
\ar[rr]^-{\delta_{x_v,\chi(y_p,x_p)}\Gamma_{h(y_p),h(x_p)}} & &
h(x_v)^\ast\otimes h(x_v)
\ar[d]|{\delta_{x_v,x_u}\delta_{x_v,y_u}(\id_{h(x_v)^\ast}\otimes
\be_{h(x_v)}\otimes \id_{h(x_v)})} \\
& &    h(x_u)^\ast\otimes h(x_u)\otimes h(y_u)^\ast\otimes h(y_u)}
$$
which is equal to
\begin{equation*}
(\mu\Delta)(p,u)=\delta_{x_u,\chi(y_p,x_p)}\delta_{y_u,\chi(y_p,x_p)}(\id_{h(\chi(y_p,x_p))}^\ast\otimes
\be_{h(\chi(y_p,x_p))}\otimes
\id_{h(\chi(y_p,x_p))})\Gamma_{h(y_p),h(x_p)}
\end{equation*}

 In Figure \ref{fig4.5} (at the end of the paper),
taking $y=x_p$ and $x=y_p$, the picture on the upper left side
represents $M$, while that on the lower right side represents
$(\mu\Delta)(p,u)$. Hence both are equal and then $\mu\Delta\doteq
\hat{\mu}(\Delta\otimes \Delta)$.
\begin{figure}[ht]
\begin{center}
\includegraphics[width=17.5cm] {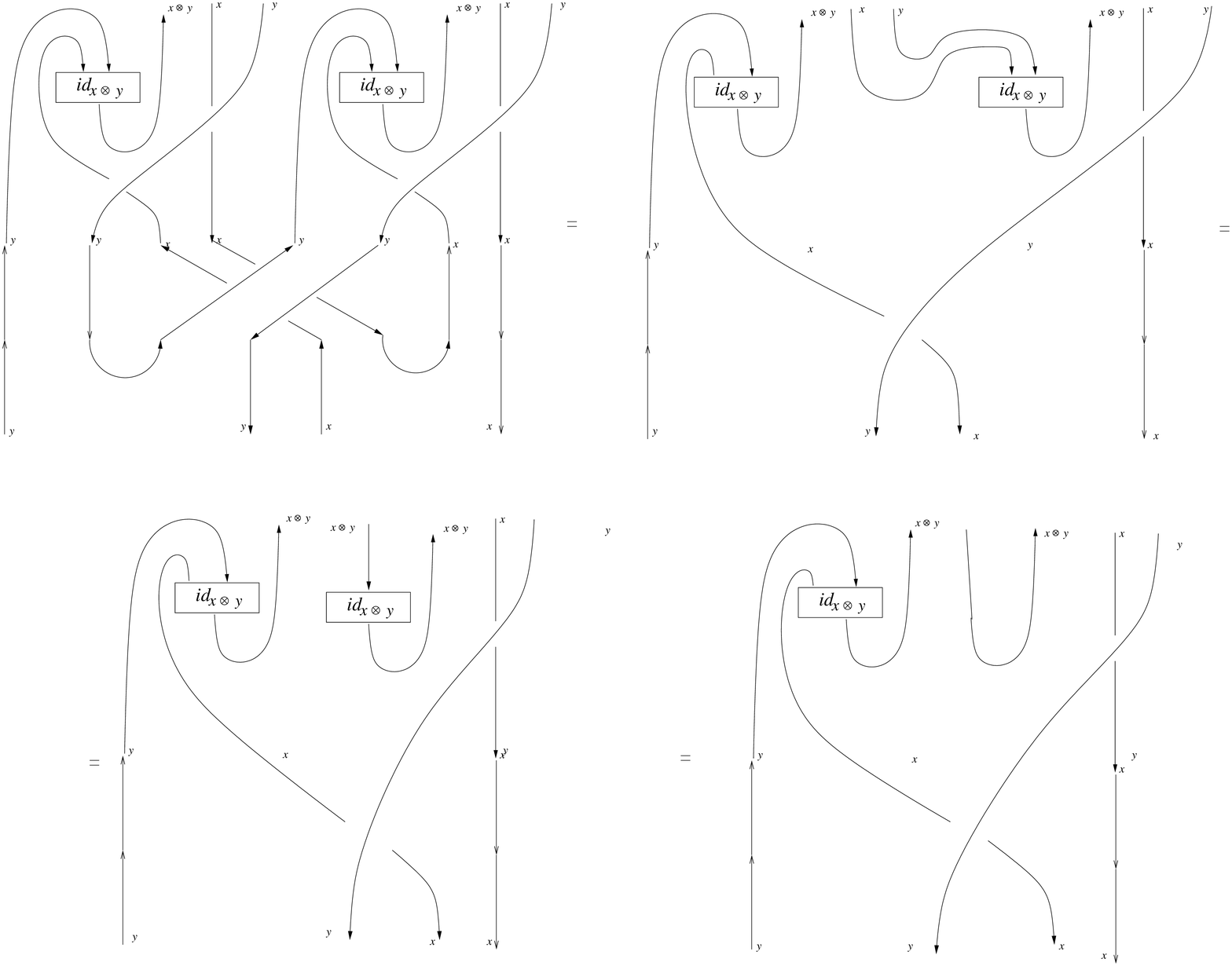}
\caption{$(\hat{\mu}(\Delta\otimes \Delta))(p,u)\doteq
(\mu\Delta)(p,u)$} \label{fig4.5}
\end{center}
\end{figure}
Finally, we have to prove that $\varepsilon$ is an algebra morphism,
that is, we have to prove that the diagram
$$\xymatrix{ \h\otimes \h
\ar[d]_{\mu}
\ar[dr]^{\varepsilon\otimes \varepsilon} & \\
\h \ar[r]^{\varepsilon} & \I  }$$ commutes. We have
\begin{equation*}
\begin{split}
(\varepsilon \mu)(u,\ast)&=\sum_w\varepsilon(\gamma(x_w,x_w),\ast)\circ \mu(u,w) \\
&=\sum_w\de_{h({x_w})}\circ
(\delta_{x_w,\chi(y_u,x_u)}\Gamma_{h(y_u),h(x_u)})\\
 &=\de_{h(\chi(y_u,x_u))}\Gamma_{h(y_u),h(x_u)}\\
 &=\de_{h(y_u)\otimes h(x_u)}\Gamma_{h(y_u),h(x_u)}
\end{split}
\end{equation*}
On the other hand, $(\varepsilon\otimes
\varepsilon)(u,\ast)=\de_{h(x_u)\otimes h(y_u)}$.

 Figure \ref{fig4.6} (at the end of the paper), taking $x=y_u$ and
 $y=x_u$ as before, shows that these two morphisms are equal.
\begin{figure}[ht]
\begin{center}
\includegraphics[width=15.5cm] {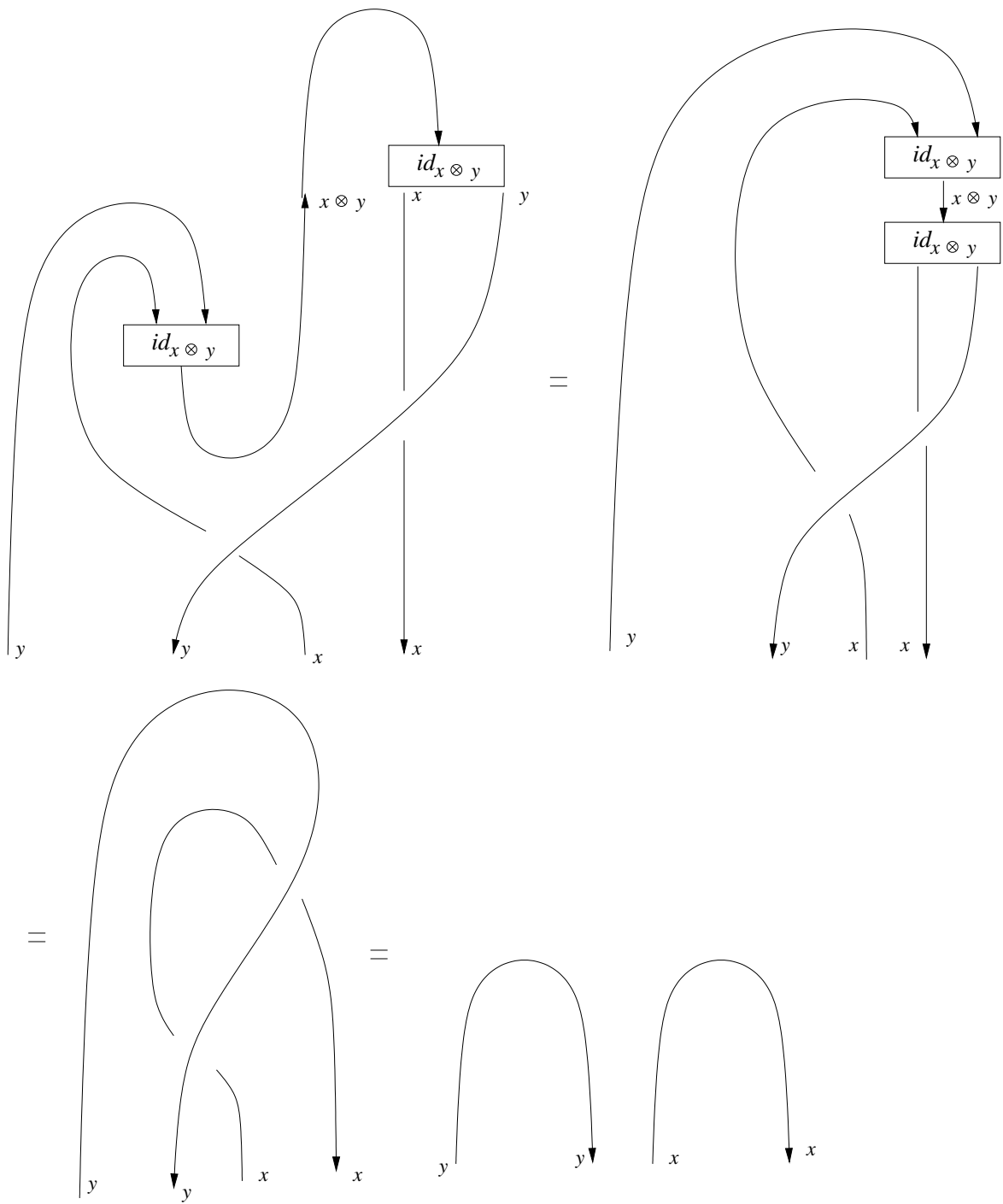}
\caption{$\de_{h_i\otimes h_j}\Gamma_{h_i,h_j}=\de_{h_j}\otimes
\de_{h_i}$}  \label{fig4.6}
\end{center}
\end{figure}
 Therefore $(\h,\mu,\eta,\Delta ,\varepsilon)$ is a bialgebra.\\
 We shall now define the action of $\h$ on the objects of $\rc$. Take the point $x_0$ of $S_h$ such that
$h(x_0)=\I$ and define $j_V(x_0)=V$, for each $V$ object of $\rc$.
Define $T:\h\otimes j_V\en j_V$, by
$$T(\gamma(\gamma(x,x),x_0),x_0)=\de_{h(x)}\otimes \id_V:h^\ast(x)\otimes h(x)\otimes V\en
V$$ where $x\in S_h$. It is not difficult to see that $T$ is indeed
an action as we defined it before. The proof of that is similar
(although easier and shorter) to the previous proofs and we omitted
it.

\end{document}